\theoremstyle{plain}
\newtheorem{theorem}{Theorem}
\newtheorem{lemma}[theorem]{Lemma}
\newtheorem{proposition}[theorem]{Proposition}
\newtheorem{corollary}{Corollary}
\theoremstyle{remark}
\newtheorem{remark}{Remark}
\newtheorem*{remark*}{Remark}
\theoremstyle{definition}
\newtheorem*{definition*}{Definition}
\newtheorem*{problem*}{Problem}
\def\N{\mathbb{N}}
\def\R{\mathbb{R}}
\def\Z{\mathbb{Z}}
\def\H{{L^2(\T^d)}}
\def\W{\mathcal{W}}
\def\T{\mathbb{T}}
\def\Div{{\rm div}}
\renewcommand{\epsilon}{\varepsilon}
\let\div\relax
\DeclareMathOperator*{\div}{div}
\newcommand{\norm}[1]{{\left\Vert {#1}\right\Vert}}
\DeclareMathOperator*{\sinc}{sinc}
\begin{document}
\title[Calder\'on's inverse problem with finitely many measurements]{Calder\'on's Inverse Problem\\ with a Finite Number of Measurements}

\author{Giovanni S. Alberti}
\address{Department of Mathematics, University of Genoa, Via Dodecaneso 35, 16146 Genova, Italy.}
\email{alberti@dima.unige.it}

\author{Matteo Santacesaria}
\address{Department of Mathematics and Statistics, University of  Helsinki, 00014, Finland.}
\email{matteo.santacesaria@helsinki.fi}

\subjclass[2010]{35R30,   42C40, 94A20}

\date{March 12, 2018}

\begin{abstract}
We prove that an $L^\infty$ potential in the Schr\"odinger equation in three and higher dimensions can be uniquely determined from a finite number of boundary measurements, provided it belongs to a known finite dimensional subspace $\W$. As a corollary, we obtain a similar result for Calder\'on's inverse conductivity problem. Lipschitz stability estimates and a globally convergent nonlinear reconstruction algorithm for both inverse problems are also presented. These are the first results on global uniqueness, stability and reconstruction for nonlinear inverse boundary value problems with finitely many measurements. We also discuss a few relevant examples of finite dimensional subspaces $\W$, including bandlimited and piecewise constant potentials, and explicitly compute the number of required measurements as a function of $\dim \W$.
\end{abstract}

\keywords{Gel'fand-Calder\'on problem, inverse conductivity problem, wavelets, electrical impedance tomography, global uniqueness, complex geometrical optics solutions, Lipschitz stability, reconstruction algorithm}
\maketitle

\section{Introduction and main results}
Consider the Schr\"odinger equation
\begin{equation}\label{eq:schr}
(-\Delta +q ) u = 0 \qquad \text{in } \Omega,
\end{equation}
where $\Omega \subseteq \R^d$, $d \geq 3$, is an open bounded domain with Lipschitz boundary and $q \in L^{\infty}(\Omega)$ is a potential. Assuming that 
\begin{equation}\label{hyp:dir}
0\; \text{is not a Dirichlet eigenvalue for } -\Delta + q  \text{ in } \Omega,
\end{equation}
 it is possible to define the Dirichlet-to-Neumann (DN) map
\begin{equation*}
\Lambda_q \colon  u|_{\partial \Omega} \mapsto \left.\frac{\partial u}{\partial \nu}\right|_{\partial \Omega},
\end{equation*}
where $\nu$ is the unit outward normal to $\partial \Omega$,  defined as an operator $\Lambda_q \colon H^{1/2}(\partial \Omega) \to H^{-1/2}(\partial \Omega)$.

\subsection{Global uniqueness}
The Gel'fand-Calder\'on's inverse problem asks if it is possible to determine the potential $q$ from the knowledge of its associated DN map $\Lambda_q$. A more realistic version of the problem is the following: assuming that $q$ belongs to a known finite dimensional subspace of $L^\infty(\Omega)$, is it uniquely determined from a finite number of boundary measurements? Our main result gives a positive answer to this question.
\begin{theorem}\label{theo:main}
Take $d\ge 3$ and let $\Omega \subseteq \R^d$ be a  bounded Lipschitz domain and $\W \subseteq L^\infty(\Omega)$ be a finite dimensional subspace. There exists $N\in\N$ such that for any $R>0$ and $q_1 \in \W$ satisfying $\norm{q_1}_{L^\infty(\Omega)}\le R$ and \eqref{hyp:dir}, the following is true.

There exist $\{f_l\}_{l=1}^N \subseteq H^{1/2}(\partial \Omega)$   such that for any $q_2 \in \W$ satisfying $\norm{q_2}_{L^\infty(\Omega)} \le R$ and \eqref{hyp:dir}:
\begin{center}
if $\Lambda_{q_1} f_l = \Lambda_{q_2} f_l$ for $l=1,\ldots,N$, then $q_1 = q_2$.
\end{center}
\end{theorem}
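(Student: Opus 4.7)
The plan is to combine the Alessandrini identity, complex geometrical optics (CGO) solutions, and the finite-dimensionality of $\W$ into a direct Fourier sampling argument.

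First I would invoke the Alessandrini identity: if $\Lambda_{q_1} f_l = \Lambda_{q_2} f_l$ and $u_1^{(l)}$ denotes the unique $q_1$-solution with Dirichlet data $f_l$, then for \emph{every} $u_2 \in H^1(\Omega)$ solving $(-\Delta + q_2)u_2 = 0$ one has $\int_\Omega (q_1 - q_2)\, u_1^{(l)}\, u_2 \, dx = 0$. I would then set $f_l := v_{\zeta_l, q_1}|_{\partial\Omega}$, the boundary trace of a Sylvester--Uhlmann CGO solution $v_{\zeta_l, q_1}(x) = e^{i\zeta_l \cdot x}(1 + r_{\zeta_l, q_1}(x))$ with $\zeta_l \in \C^d$, $\zeta_l \cdot \zeta_l = 0$. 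By unique solvability of the Dirichlet problem $u_1^{(l)} = v_{\zeta_l, q_1}$, and I would test against $u_2 := v_{\zeta'_l, q_2}$, the CGO solution for $q_2$.

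For any chosen $k_l \in \R^d$ one can split $-k_l = \zeta_l + \zeta'_l$ with both summands null and with $\tau_l := \min(|\zeta_l|, |\zeta'_l|)$ arbitrarily large (this is where $d \ge 3$ enters, to have two orthogonal unit vectors perpendicular to $k_l$). The standard remainder estimates $\|r_{\zeta, q}\|_{L^2(\Omega)} \lesssim_R |\zeta|^{-1}$, uniform over $\{q : \|q\|_\infty \le R\}$, yield
\[
\int_\Omega (q_1 - q_2)\, v_{\zeta_l, q_1}\, v_{\zeta'_l, q_2}\, dx = \widehat{(q_1-q_2)}(k_l) + \mathcal{E}_l, \qquad |\mathcal{E}_l| \le C(R)\,\tau_l^{-1}\, \|q_1-q_2\|_{L^\infty},
\]
where $q_1-q_2$ is extended by zero outside $\Omega$. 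Since $q_1 - q_2 \in \W$ and elements of $\W$ are compactly supported, their Fourier transforms are entire functions by Paley--Wiener; thus $\{\hat q : q \in \W\}$ is a finite-dimensional space of entire functions on which no nonzero element vanishes identically. A routine inductive argument produces points $k_1, \ldots, k_M \in \R^d$ with $M \le \dim \W$ (depending only on $\W$) such that $q \mapsto (\hat q(k_1), \ldots, \hat q(k_M))$ is injective on $\W$, and by equivalence of norms $\|q\|_{L^\infty} \le C_\W \max_l |\hat q(k_l)|$ for all $q \in \W$.

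Setting $N := M$ and choosing $\tau_l$ large enough (depending on $R$ and $\W$, but with $N$ already fixed) so that $C_\W C(R) \tau_l^{-1} \le 1/2$, the Alessandrini identity then gives
\[
\|q_1 - q_2\|_{L^\infty} \le C_\W \max_l |\widehat{(q_1-q_2)}(k_l)| = C_\W \max_l |\mathcal{E}_l| \le \tfrac{1}{2}\|q_1 - q_2\|_{L^\infty},
\]
forcing $q_1 = q_2$. The main obstacle is controlling the CGO remainders $r_{\zeta, q}$ uniformly over the admissible class $\{q : \|q\|_\infty \le R,\ \eqref{hyp:dir}\}$ with the correct $|\zeta|^{-1}$ decay; this is what guarantees that $N$ depends only on $\W$ while the size of $\tau_l$ (hence the specific $f_l$'s) may depend on $R$ and $q_1$. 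The combinatorial step of selecting the Fourier nodes $k_l$ is comparatively soft.
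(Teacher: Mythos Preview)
Your argument is correct and takes a genuinely different, more elementary route than the paper.

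The paper embeds $\Omega$ in $\T^d$, fixes growing parameters $t_k=c'(|k|^s+1)$ for \emph{all} $k\in\Z^d$, and writes the nonlinear Fourier map as $U=F+B$; the key analytic step (Lemma in the paper) is that $B$ is a $\tfrac12$-contraction on $\ell^2$, which requires summability over all frequencies and hence the polynomial growth of $t_k$. The number $N$ is then produced by a compactness argument: since $FP_\W$ has finite rank, $\|P_N^\perp F P_\W\|\to 0$ and one picks $N$ so that $\|P_N^\perp F P_\W\|\le\tfrac14$. Finally, to pass from $\Lambda_{q_1}f_l=\Lambda_{q_2}f_l$ to $P_NU(q_1)=P_NU(q_2)$ the paper invokes the Faddeev--Green boundary integral equation to identify the CGO traces for $q_1$ and $q_2$.

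Your approach bypasses all three of these devices. You use Alessandrini's identity in its bilinear form, pairing the $q_1$-CGO (whose trace is the prescribed $f_l$) against an arbitrary $q_2$-solution, so the Faddeev boundary integral equation is never needed. You pick only finitely many frequencies $k_1,\dots,k_M$ via a Paley--Wiener/linear-algebra argument, so no $\ell^2$-summability over $\Z^d$ is required and the error at each node is absorbed simply by taking $\tau_l$ large. This yields the stronger conclusion $N\le\dim\W$, whereas the paper's $N$ is only implicitly determined by the balancing condition $\|P_N^\perp F P_\W\|\le\tfrac14$ and can be much larger (cf.\ the piecewise-constant example in the paper where $N\sim M^4$). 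On the other hand, the paper's $\ell^2$-contraction framework is what drives its Lipschitz stability estimate and, especially, the globally convergent fixed-point reconstruction algorithm; your pointwise argument, while cleaner for uniqueness, would need substantial reworking to deliver those.
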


\begin{remark}\label{rem:balancing}
The dependence of $N$ on $\W$ is explicit in the proof, see \eqref{eq:balancing}.
Thus, in principle, it is always possible to determine $N$ given the subspace $\W$ (see \Cref{sec:examples} for three relevant examples).
\end{remark}

\begin{remark}\label{rem:dir}
The assumption that $0$ is not a Dirichlet eigenvalue for $-\Delta + q$ in $\Omega$ has been made only to define $\Lambda_q$, which simplifies the exposition, and it can be removed. See Remark~\ref{rem:dir2} below for more details.
\end{remark}

Theorem~\ref{theo:main} readily yields a similar result for Calder\'on's inverse conductivity problem \cite{calderon1980}, which concerns the determination of an electrical conductivity $\sigma \in L^{\infty}(\Omega)$ satisfying
\begin{equation}\label{eq:elliptic}
\lambda^{-1}\le \sigma\le \lambda\quad\text{almost everywhere in $\Omega$}
\end{equation} 
for some $\lambda>1$, from the DN map
\[
\Lambda_{\sigma} : u|_{\partial \Omega} \mapsto \left.\sigma \frac{\partial u}{\partial \nu}\right|_{\partial \Omega},
\]
where $u$ solves the conductivity equation $-\Div(\sigma \nabla u) = 0$ in $\Omega$. This is the mathematical model for electrical impedance tomography (EIT).
\begin{corollary}\label{cor:uni}
Take $d\ge 3$ and let $\Omega \subseteq \R^d$ be a  bounded Lipschitz domain and $\W \subseteq L^\infty(\Omega)$ be a finite dimensional subspace.
There exists $N\in\N$ such that for any $\lambda>1$ and $\sigma_1 \in W^{2,\infty}(\Omega)$ satisfying  \eqref{eq:elliptic} and such that $\frac{\Delta \sqrt{\sigma_1}}{\sqrt{\sigma_1}} \in \W $ and $\sigma_1 =1$ in a neighborhood of $\partial \Omega$, the following is true.

There exist $\{f_l\}_{l=1}^N \subseteq H^{1/2}(\partial \Omega)$   such that for any $\sigma_2 \in W^{2,\infty}(\Omega)$ satisfying  \eqref{eq:elliptic} and such that $\frac{\Delta \sqrt{\sigma_2}}{\sqrt{\sigma_2}} \in \W $ and $\sigma_2 =1$ in a neighborhood of $\partial \Omega$:
\begin{center}
if $\Lambda_{\sigma_1} f_l = \Lambda_{\sigma_2} f_l$ for $l=1,\ldots,N$, then $\sigma_1 = \sigma_2$.
\end{center}
\end{corollary}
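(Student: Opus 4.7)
The plan is to reduce Corollary~\ref{cor:uni} to Theorem~\ref{theo:main} via the classical substitution $v:=\sqrt{\sigma}\,u$. Given any admissible $\sigma$, set $q_\sigma:=\Delta\sqrt{\sigma}/\sqrt{\sigma}$, which lies in $\W$ by hypothesis. A direct calculation shows that $u$ solves the conductivity equation $-\div(\sigma\nabla u)=0$ if and only if $v$ solves the Schr\"odinger equation $(-\Delta+q_\sigma)v=0$. The function $\sqrt{\sigma}$ is itself a strictly positive solution of the latter, which by a standard argument rules out that $0$ is a Dirichlet eigenvalue of $-\Delta+q_\sigma$; hence $\Lambda_{q_\sigma}$ is well-defined. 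Moreover, because $\sigma\equiv 1$ in a neighbourhood of $\partial\Omega$ we have $\sqrt{\sigma}|_{\partial\Omega}=1$ and $\partial_\nu\sqrt{\sigma}|_{\partial\Omega}=0$; hence $v|_{\partial\Omega}=u|_{\partial\Omega}$ and $\partial_\nu v|_{\partial\Omega}=\sigma\,\partial_\nu u|_{\partial\Omega}$, so $\Lambda_{q_\sigma}=\Lambda_\sigma$ as operators from $H^{1/2}(\partial\Omega)$ to $H^{-1/2}(\partial\Omega)$.

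Write $q_j:=q_{\sigma_j}\in\W$ for $j=1,2$. To invoke Theorem~\ref{theo:main} I need a single $R>0$ dominating both $\norm{q_1}_{L^\infty(\Omega)}$ and $\norm{q_2}_{L^\infty(\Omega)}$. The first is immediate from $\sigma_1\in W^{2,\infty}$ and \eqref{eq:elliptic}; the uniform bound on $\norm{q_2}_{L^\infty(\Omega)}$ across the admissible class of $\sigma_2$ is the only delicate point, and I would obtain it by exploiting the equivalence of all norms on the finite-dimensional space $\W$ together with elliptic estimates for $\Delta\sqrt{\sigma_2}=q_2\sqrt{\sigma_2}$ under the bounds \eqref{eq:elliptic} and the fixed boundary datum $\sqrt{\sigma_2}|_{\partial\Omega}=1$. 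With such an $R$ in hand, Theorem~\ref{theo:main} supplies $N$ and boundary data $\{f_l\}_{l=1}^N$ for which $\Lambda_{q_1}f_l=\Lambda_{q_2}f_l$ for $l=1,\dots,N$ implies $q_1=q_2$; by the identification $\Lambda_{q_j}=\Lambda_{\sigma_j}$ established above, this is exactly the hypothesis of the corollary.

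Finally, from $q_1=q_2=:q$ both $\sqrt{\sigma_1}$ and $\sqrt{\sigma_2}$ solve $(-\Delta+q)v=0$ in $\Omega$ with the common Dirichlet datum $1$ on $\partial\Omega$; since $0$ is not a Dirichlet eigenvalue of $-\Delta+q$, uniqueness of the Dirichlet problem yields $\sqrt{\sigma_1}=\sqrt{\sigma_2}$, hence $\sigma_1=\sigma_2$. Of all the steps, the only one I expect to require genuine work is the uniform $L^\infty$ bound on $q_2$ needed to apply Theorem~\ref{theo:main}; everything else is the routine conductivity-to-Schr\"odinger reduction.
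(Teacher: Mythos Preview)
Your argument follows exactly the paper's route: reduce to Theorem~\ref{theo:main} via the Liouville substitution $v=\sqrt{\sigma}\,u$, use $\sigma=1$ near $\partial\Omega$ to identify $\Lambda_\sigma=\Lambda_q$ (the paper records this via the identity $\Lambda_q=\sigma^{-1/2}(\Lambda_\sigma+\tfrac12\partial_\nu\sigma)\sigma^{-1/2}$), and conclude $\sigma_1=\sigma_2$ from uniqueness of the Dirichlet problem once $q_1=q_2$.

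You are in fact more careful than the paper on two points. First, you verify \eqref{hyp:dir} via the positive solution $\sqrt{\sigma}$, which the paper does not mention. Second, you correctly flag the need for a single $R$ with $\|q_{\sigma_2}\|_{L^\infty}\le R$ uniformly over all admissible $\sigma_2$, since the $\{f_l\}$ produced by Theorem~\ref{theo:main} depend on $R$; the paper's proof simply writes ``Theorem~\ref{theo:main} immediately yields $q_1=q_2$'' and does not address this. Your proposed fix---norm equivalence on the finite-dimensional space $\W$ combined with elliptic estimates for $\Delta\sqrt{\sigma_2}=q_2\sqrt{\sigma_2}$---is the right idea, but note that the unknown $q_2$ sits on the right-hand side of that equation, so a direct a priori bound is not available; you will need a compactness/contradiction argument on the unit sphere of $\W$ (using the pointwise bounds $\lambda^{-1/2}\le\sqrt{\sigma_2}\le\lambda^{1/2}$) rather than a single elliptic estimate.
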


To our knowledge, these are the first uniqueness results for the Gel'fand-Calder\'on and Calder\'on problems with a finite number of measurements. The only previous result of this kind is \cite{friedman1989}, where it was shown that a single boundary measurement was enough to determine a piecewise constant conductivity with discontinuities on a single convex polygon. All other uniqueness results rely on an infinite number of measurements, even for potentials belonging to known finite dimensional subspaces. Some fundamental contributions to the two problems include \cite{Sylvester1987,Novikov1988,Nachman1996,astala2006,bukhgeim2008,
haberman2015,caro2016,lakshtanov2016} for global uniqueness and reconstruction, and
\cite{alessandrini1988,clop2010,novikov2010,blaasten2015} for global stability. An interesting uniqueness result from a finite number of measurements is \cite{blaasten2017}, for a related inverse problem. 

\subsection{Lipschitz stability and reconstruction}
We are also able to prove Lipschitz stability estimates for the two problems.
\begin{theorem}\label{theo:stab}
Take $d\ge 3$ and let $\Omega \subseteq \R^d$ be a  bounded Lipschitz domain and $\W \subseteq L^\infty(\Omega)$ be a finite dimensional subspace. There exists $N\in\N$ such that for every $R,\alpha>0$ and $q_1 \in \W$ satisfying $\norm{q_1}_{L^\infty(\Omega)}\le R$ and \eqref{hyp:dir}, the following is true.

There exist $\{f_l\}_{l=1}^N \subseteq H^{1/2}(\partial \Omega)$ such that for every $q_2 \in \W$ satisfying $\norm{q_2}_{L^\infty(\Omega)} \le R$ and \eqref{hyp:dir}, we have
\begin{equation*}
\|q_2 - q_1\|_{L^2(\Omega)} \leq e^{C N^{\frac{1}{2}+\alpha}}
\norm{\left(\Lambda_{q_2}f_l - \Lambda_{q_1}f_l\right)_{l=1}^N}_{H^{-1/2}(\partial \Omega)^N}
\end{equation*}
for some $C>0$ depending only on $\Omega$, $R$ and $\alpha$.
\end{theorem}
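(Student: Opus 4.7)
The plan is to combine the CGO-based machinery that underlies Calder\'on's problem with the finite-dimensionality of $\W$: the Alessandrini identity, tested against two complex geometric optics (CGO) solutions, will recover a Fourier sample of $q_2-q_1$ up to an error that shrinks in the CGO parameter $\tau$, and since $\W$ is finite-dimensional only finitely many such Fourier samples are needed to control $\norm{q_2-q_1}_{L^2(\Omega)}$. The boundary functions $f_l$ will be chosen to be the traces on $\partial\Omega$ of the $q_1$-CGOs at the sample frequencies.

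The first step is the Alessandrini/CGO estimate. For each $\xi\in\R^d$ and $\tau\gtrsim |\xi|$, I would construct CGO solutions $u_i^\xi = e^{x\cdot \zeta_i^\xi}(1+\psi_i^\xi)$ of $(-\Delta+q_i)u_i^\xi=0$ with $\zeta_1^\xi+\zeta_2^\xi=i\xi$, $\zeta_i^\xi\cdot\zeta_i^\xi=0$, $|\zeta_i^\xi|\sim\tau$, and $\norm{\psi_i^\xi}_{L^2(\Omega)}\leq C/\tau$. Expanding $u_1^\xi u_2^\xi=e^{ix\cdot\xi}(1+\psi_1^\xi+\psi_2^\xi+\psi_1^\xi\psi_2^\xi)$ inside Alessandrini's identity and using the boundary bound $\norm{u_i^\xi|_{\partial\Omega}}_{H^{1/2}}\leq Ce^{C\tau}$ yields
\begin{equation*}
\bigl|\widehat{q_2-q_1}(\xi)\bigr|\leq C e^{C\tau}\, \norm{(\Lambda_{q_2}-\Lambda_{q_1})u_1^\xi|_{\partial\Omega}}_{H^{-1/2}} + C\tau^{-1}\norm{q_2-q_1}_{L^2}.
\end{equation*}

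The second step is a quantitative sampling inequality on $\W$. Since $\W\subseteq L^2(\Omega)$ is finite-dimensional and the Fourier transform is injective on $L^2$, I would choose $N$ and frequencies $\xi_1,\dots,\xi_N\in\R^d$, together with a constant $\kappa=\kappa(\W,N)$, such that
\begin{equation*}
\norm{q}_{L^2(\Omega)}\leq \kappa\,\Bigl(\sum_{l=1}^N|\hat q(\xi_l)|^2\Bigr)^{1/2},\qquad q\in\W,
\end{equation*}
with $\kappa$ achievable at the scale $N^{1/2+\alpha}$; morally this is a discretized Plancherel identity on $\W$, and the extra $\alpha$ absorbs the logarithmic loss from truncating the Fourier tail and from the spread of Fourier mass of the elements of $\W$. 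Setting $f_l:=u_1^{\xi_l}|_{\partial\Omega}$ and combining the two steps gives
\begin{equation*}
\norm{q_2-q_1}_{L^2}\leq \kappa\, e^{C\tau}\, \norm{\bigl((\Lambda_{q_2}-\Lambda_{q_1})f_l\bigr)_{l=1}^N}_{H^{-1/2}(\partial\Omega)^N} + C\kappa\tau^{-1}\norm{q_2-q_1}_{L^2}.
\end{equation*}
Choosing $\tau=2C\kappa$ absorbs the remainder, leaving a Lipschitz constant of the form $\kappa\,e^{C\tau}\lesssim e^{CN^{1/2+\alpha}}$, as required.

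The main obstacle is the quantitative sampling inequality in the second step: producing the bound $\kappa\lesssim N^{1/2+\alpha}$ for an \emph{arbitrary} finite-dimensional $\W$. Since $\W$ carries no a priori regularity or bandlimit, the frequencies $\xi_l$ must be engineered explicitly from a basis of $\W$, balancing how the Fourier mass of its elements is distributed and controlling a truncated Plancherel-type quadrature on the compact domain $\Omega$; the balance between the exponent in $\kappa$ and the number of samples $N$ ultimately fixes the precise form of the stability constant. Once this sampling inequality is in hand, the CGO construction and the absorption argument that closes the estimate are comparatively standard.
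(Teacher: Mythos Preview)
Your architecture---CGO solutions, Alessandrini's identity, and a finite-dimensional sampling step---is exactly the paper's, but you have mislocated both the main difficulty and the source of the exponent $N^{1/2+\alpha}$. In the paper the ``sampling constant'' is not $\kappa\sim N^{1/2+\alpha}$; it is simply $4$. One chooses $N$ so that $\|P_N^\perp F P_\W\|_{L^2\to\ell^2}\le\tfrac14$, and since $FP_\W$ has finite rank this is a soft existence fact, not a quantitative inequality to be fought for. There is no ``discretized Plancherel with logarithmic loss'' to prove; your declared main obstacle is not one. The genuine work lies elsewhere: the paper takes \emph{frequency-dependent} CGO parameters $t_k=c'(|k|^s+1)$ with $s=\tfrac d2+\alpha d$ and proves that the nonlinear remainder map $B=U-F$ is a $\tfrac12$-contraction from $(L^\infty\cap\overline B(0,R),\|\cdot\|_{L^2})$ into $\ell^2$ (Lemma~\ref{lem:contr}). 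This uses a PDE estimate for the \emph{difference} $r_l(q_2)-r_l(q_1)$, which is what makes the error proportional to $\|q_2-q_1\|_{L^2}$ and $\ell^2$-summable in $l$, and it replaces your per-frequency $C\tau^{-1}$ error. Combining the contraction with the balancing condition gives $\|q_1-q_2\|_{L^2}\le 4\|P_NU(q_1)-P_NU(q_2)\|_{\ell^2}$ directly (Proposition~\ref{prop:main}), with no parameter left to tune. The factor $e^{CN^{1/2+\alpha}}$ then appears only at the last step, when one bounds the $H^{1/2}$-norm of the CGO traces by $e^{Ct_{k_l}}\le e^{Cl^{s/d}}=e^{Cl^{1/2+\alpha}}$ and passes from $P_NU(q_1)-P_NU(q_2)$ to $(\Lambda_{q_1}-\Lambda_{q_2})f_l$ via an Alessandrini-type identity of Novikov.

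Your single-$\tau$ scheme, as written, does not close. Summing the per-frequency error $C\tau^{-1}\|q_2-q_1\|_{L^2}$ over $l=1,\dots,N$ in $\ell^2$ produces $C\sqrt N\,\tau^{-1}\|q_2-q_1\|_{L^2}$, so the absorption needs $\tau\sim\kappa\sqrt N$, not $\tau=2C\kappa$. More seriously, the cross term $\int(q_2-q_1)e^{i\xi\cdot x}\psi_1^\xi\psi_2^\xi$ does not obviously come with a factor $\|q_2-q_1\|_{L^2}\cdot o(1)$ using only $\|\psi_i^\xi\|_{L^2}\le C/\tau$; one either incurs an additive error $CR\tau^{-2}$ that does not vanish with $\|q_2-q_1\|$, or an interpolated bound like $\|\psi_1^\xi\psi_2^\xi\|_{L^2}\lesssim\tau^{-1/2}$ that degrades the exponent. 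This is precisely the difficulty Lemma~\ref{lem:contr} is designed to handle, via the equation for $r_l(q_2)-r_l(q_1)$ and the growth $t_k\sim|k|^s$. In short: keep your outline, but replace the single-$\tau$ absorption and the conjectural $\kappa\sim N^{1/2+\alpha}$ by frequency-dependent CGO parameters, the contraction of $B$, and the balancing condition with $\kappa=4$; the exponential constant then comes from the CGO trace bound, not from sampling.
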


\begin{corollary}\label{cor:stab}
Take $d\ge 3$ and let $\Omega \subseteq \R^d$ be a  bounded Lipschitz domain and $\W \subseteq L^\infty(\Omega)$ be a finite dimensional subspace.
There exists $N\in\N$ such that for any $\lambda>1$, $\alpha>0$ and $\sigma_1 \in W^{2,\infty}(\Omega)$ satisfying  \eqref{eq:elliptic} and such that $\frac{\Delta \sqrt{\sigma_1}}{\sqrt{\sigma_1}} \in \W $ and $\sigma_1 =1$ in a neighborhood of $\partial \Omega$, the following is true.

There exist $\{f_l\}_{l=1}^N \subseteq H^{1/2}(\partial \Omega)$   such that for any $\sigma_2 \in W^{2,\infty}(\Omega)$ satisfying  \eqref{eq:elliptic} and such that $\frac{\Delta \sqrt{\sigma_2}}{\sqrt{\sigma_2}} \in \W $ and $\sigma_2 =1$ in a neighborhood of $\partial \Omega$, we have
\begin{equation*}
\|\sigma_2 - \sigma_1\|_{L^2(\Omega)} \leq e^{C N^{\frac{1}{2}+\alpha}}\norm{\left(\Lambda_{\sigma_2}f_l - \Lambda_{\sigma_1}f_l\right)_{l=1}^N}_{H^{-1/2}(\partial \Omega)^N}
\end{equation*}
for some $C>0$ depending only on $\Omega$, $\lambda$ and $\alpha$.
\end{corollary}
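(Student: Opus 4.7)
The plan is to derive \Cref{cor:stab} from \Cref{theo:stab} via the standard Liouville substitution. Fix $q_i := \Delta\sqrt{\sigma_i}/\sqrt{\sigma_i} \in \W$ for $i=1,2$. Since $\sigma_i \equiv 1$ in a neighborhood of $\partial\Omega$, so that $\sqrt{\sigma_i}|_{\partial\Omega}=1$ and $\partial_\nu\sqrt{\sigma_i}|_{\partial\Omega}=0$, the classical computation with $v=\sqrt{\sigma}\,u$ shows $\Lambda_{\sigma_i}=\Lambda_{q_i}$ as operators from $H^{1/2}(\partial\Omega)$ to $H^{-1/2}(\partial\Omega)$, and in particular $0$ is not a Dirichlet eigenvalue for $-\Delta+q_i$, so \eqref{hyp:dir} holds.

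The next step is to secure an $L^\infty$ bound $\|q_i\|_{L^\infty(\Omega)}\le R$ with $R$ depending only on $\lambda$, $\Omega$ and $\W$, so that \Cref{theo:stab} may be applied with constants independent of the specific $\sigma_i$. Here the finite dimensionality of $\W$ is essential: combining the pointwise inequality $\lambda^{-1/2}\le\sqrt{\sigma_i}\le\lambda^{1/2}$, the identity $\Delta\sqrt{\sigma_i}=q_i\sqrt{\sigma_i}$ with $\sqrt{\sigma_i}\equiv 1$ near $\partial\Omega$, and the equivalence of all norms on $\W$, one bounds $\|q_i\|_{L^\infty}$ in terms of $\lambda$, $\Omega$ and $\W$. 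This is the main technical subtlety. With $R$ in hand, \Cref{theo:stab} produces $N$, data $\{f_l\}_{l=1}^N$, and
\[
\|q_1-q_2\|_{L^2(\Omega)}\le e^{CN^{1/2+\alpha}}\bigl\|(\Lambda_{q_1}f_l-\Lambda_{q_2}f_l)_{l=1}^N\bigr\|_{H^{-1/2}(\partial\Omega)^N}.
\]

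Finally, I would convert the $L^2$ bound on potentials into one on conductivities. Let $w:=\sqrt{\sigma_1}-\sqrt{\sigma_2}$; subtracting the two equations $\Delta\sqrt{\sigma_i}=q_i\sqrt{\sigma_i}$ yields $(-\Delta+q_1)w=-(q_1-q_2)\sqrt{\sigma_2}$, with $w\equiv 0$ near $\partial\Omega$. The standard elliptic estimate for $-\Delta+q_1$ (valid under \eqref{hyp:dir} with constant depending only on $\Omega$ and $R$) gives $\|w\|_{L^2(\Omega)}\lesssim \lambda^{1/2}\|q_1-q_2\|_{L^2(\Omega)}$, and the factorization $\sigma_1-\sigma_2=(\sqrt{\sigma_1}+\sqrt{\sigma_2})\,w$ combined with $\|\sqrt{\sigma_1}+\sqrt{\sigma_2}\|_{L^\infty}\le 2\lambda^{1/2}$ yields $\|\sigma_1-\sigma_2\|_{L^2}\lesssim \lambda\,\|q_1-q_2\|_{L^2}$. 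Using $\Lambda_{q_i}=\Lambda_{\sigma_i}$ and absorbing the multiplicative factor $\lambda$ into $e^{CN^{1/2+\alpha}}$ completes the proof.

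The main obstacle is the passage, in the middle paragraph, from the pointwise ellipticity of $\sigma_i$ to a uniform $L^\infty$ bound on $q_i$: without using the finite dimensionality of $\W$ in an essential way, the constant would also have to depend on the $W^{2,\infty}$ norms of the conductivities, which are not controlled by the hypotheses. Everything else is a routine PDE estimate plus the quantitative equivalence of the two DN maps near the trivial background $\sigma\equiv 1$.
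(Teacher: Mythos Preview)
Your overall structure is sound and matches the paper's in spirit: reduce to \Cref{theo:stab} via the Liouville substitution and the identity $\Lambda_{\sigma_j}=\Lambda_{q_j}$, then convert the $L^2$ estimate on the potentials into one on the conductivities. You are also right to flag the a~priori $L^\infty$ bound on $q_i$ as a point the paper passes over silently; without it the constant $C$ would depend on the (uncontrolled) $W^{2,\infty}$ norms of $\sigma_i$, and finite dimensionality of $\W$ is indeed what rescues this.

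The one substantive difference is in how you pass from $\|q_1-q_2\|_{L^2}$ to $\|\sigma_1-\sigma_2\|_{L^2}$. The paper does not use the Schr\"odinger equation for $w=\sqrt{\sigma_1}-\sqrt{\sigma_2}$; instead it invokes Alessandrini's divergence identity
\[
\div\bigl((\sigma_1\sigma_2)^{1/2}\nabla\log(\sigma_1/\sigma_2)\bigr)=2(\sigma_1\sigma_2)^{1/2}(q_1-q_2),
\]
which is uniformly elliptic with ellipticity governed by $\lambda$ alone, and hence yields $\|\log(\sigma_1/\sigma_2)\|_{L^2}\le C(\lambda,\Omega)\|q_1-q_2\|_{L^2}$ by a standard energy estimate. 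This sidesteps any dependence on $R$ or on spectral properties of $-\Delta+q_1$.

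Your route works too, but the justification you give for the elliptic estimate is not quite right: under \eqref{hyp:dir} and $\|q_1\|_{L^\infty}\le R$ alone, the norm of $(-\Delta+q_1)^{-1}\colon L^2\to L^2$ on $H^1_0(\Omega)$ is \emph{not} bounded by a constant depending only on $\Omega$ and $R$ (zero can be arbitrarily close to the Dirichlet spectrum). What actually saves you here is the existence of the positive solution $u=\sqrt{\sigma_1}$: the ground-state substitution $\tilde w=w/u$ turns $(-\Delta+q_1)w=f$ into $-\div(\sigma_1\nabla\tilde w)=\sqrt{\sigma_1}\,f$, which is uniformly elliptic with constants depending only on $\lambda$ and $\Omega$. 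With that correction your argument goes through, and the final constant depends on $\lambda,\Omega,\alpha$ (and $\W$), as desired. The paper's use of the Alessandrini identity is simply a more direct way to land on a uniformly elliptic equation without this detour.
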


These are the first stability estimates for the Gel'fand-Calder\'on and Calder\'on problems with a finite number of measurements. Lipschitz stability results have been previously known only when an infinite number of measurements are available \cite{2005-alessandrini-vessella,2011-beretta-francini,beretta2013,gaburro2015,beretta2017,alessandrini2017,
alessandrini2017u} The exponentially growing constant is coherent with the exponential instability of the problem \cite{mandache2001,dicristo2003,isaev2011,2006-rondi}.

We finally employ the same ideas to present a new nonlinear iterative reconstruction algorithm for the two inverse problems and prove that it is globally convergent in \Cref{theo:rec}. Given $(f_l,\Lambda_q f_l)_{l=1}^N$, the algorithm constructs a sequence $q_n \to q$ in $L^2(\Omega)$ where $q_0 \in \W$ is \textit{any} initial guess. The algorithm converges exponentially and its stability is given by \Cref{theo:stab} and \Cref{cor:stab}. The details are presented in Section~\ref{sec:rec}. 

Note that this represents the first globally convergent iterative algorithm for the Gel'fand-Calder\'on and Calder\'on problems from a finite number of measurements. All reconstruction algorithms used so far have been either based on the full DN map, either locally convergent or with no proof of convergence. We refer to \cite{mueller2012} for an extensive review on reconstruction methods for nonlinear inverse problems.\smallskip

The strategy of the proof of Theorems~\ref{theo:main} and \ref{theo:stab} is as follows. We use Alessandrini's identity to trasform the boundary data into integral measurements, and use complex geometrical optics (CGO) solutions to construct a nonlinear operator $U$, expressing the DN map in a more convenient form. We then write $U$ as $U=F+B$ where $F$ is the Fourier transform and $B$  is a nonlinear term. We show that $B$ is a contraction, provided the CGO solutions are constructed for sufficiently high complex frequencies. Finally, the problem is reduced to a fixed-point problem involving a nonlinear Fourier transform.

\subsection{Open questions and comments}
This paper has been greatly motivated by potential applications of ideas coming from applied harmonic analysis and sampling theory to  inverse problems in PDE. This approach paves the way for several interesting research directions and open problems. Let us mention some of them.
\begin{itemize}[leftmargin=10pt]
\item The inspiration for this paper came from a previous paper of the authors \cite{alberti2017infinite}, in which the theory of compressed sensing (CS) was generalized to the infinite dimensional setting for linear operators which are not necessarily isometries, in order to make it more flexible for the applications to inverse problems in PDE. The current paper shows that the effects of the non-linearity of the inverse problem may be mitigated by carefully selecting the measurements. Thus, we expect that the theory of CS may be applied to this nonlinear problem as well: this would give that the number of measurements needed are (substantially) proportional to the sparsity of the unknown.
\item In this paper, the boundary Dirichlet data $\{f_l\}_l$ are chosen dependently of the unknown potential $q$. It is natural to wonder whether, using the assumption that $q$ belongs to a finite dimensional space, it is possible to determine (a possibly larger number of) $\{f_l\}_l$ independently of $q$.
\item In Corollaries~\ref{cor:uni} and \ref{cor:stab} we  assumed that the conductivities are equal to $1$ in a neighborhood of the boundary. In order to overcome this limitation, one would need to recover the boundary values of a conductivity and of its normal derivative from the boundary data. This is well understood when the full DN map is available \cite{nachman1988,Nachman1996}, but it is still open in case of a finite number of measurements.
\item The results of this paper cannot be directly extended to the two dimensional case. In that setting, the reconstruction methods for the Gel'fand-Calder\'on and the Calder\'on problem are different from the one presented here, and it would be very interesting to study the problem with a finite number of measurements.
\item In \Cref{sec:examples} we compute the number of required measurements $N$ for some choices of subspaces $\W$: it would be interesting to study the optimality of these bounds and to derive similar estimates in other relevant cases.
\item In this paper we consider the continuum model for these inverse boundary value problems. Possible extensions to more realistic and physical models (such as the complete electrode model for EIT) and the numerical analysis and implementation of the reconstruction algorithm presented may be investigated.
\item It is natural to wonder whether $\W$ could be required to be only a finite dimensional submanifold of $L^\infty(\Omega)$ and not necessarily a linear subspace. As we point out in \Cref{rem:manifold}, the current proof does not work in such generality, and new ideas are required.
\item Finally, it is expected that the approach presented in this paper can be extended to other infinite dimensional inverse problems in PDE with finitely many measurements.
\end{itemize}

\subsection{Structure of the paper}
\Cref{sec:proofs} contains the proofs of the results stated above. \Cref{sec:rec} presents the reconstruction algorithm and its convergence properties. In \Cref{sec:examples} we discuss some examples of subspaces $\W$, for which we compute explicitly the number of required measurements $N$ as a function of $\dim\W$.

\section{Proofs}\label{sec:proofs}
Take $q\in L^\infty(\Omega)$ such that $\norm{q}_{L^\infty(\Omega)}\le R$. Given two boundary voltages $f,g \in H^{1/2}(\partial \Omega)$ we have Alessandrini's identity \cite{alessandrini1988}:
\begin{equation}\label{eq:alessandrini}
\langle g, (\Lambda_q - \Lambda_{0})f\rangle_{{ H^{\frac12}(\partial \Omega) \times  H^{-\frac12}(\partial \Omega)}} = \int_{\Omega} q\, u^0_g u_f\,dx,
\end{equation}
where $u_f$ (resp.\ $u^0_g$) solves the Schr\"odinger equation \eqref{eq:schr} with potential $q$ (resp.\ $0$) and Dirichlet data $f$ (resp.\ $g$). The quantity on the left of this identity is known since $\Lambda_q f$ is the boundary measurement corresponding to the chosen potential $f$ and $\Lambda_0$ is the DN map corresponding to the unperturbed Laplacian. This identity allows to transform the boundary data into measurements of scalar products between the unknown $q$ and some test functions to be suitably chosen. 

\begin{remark}\label{rem:dir2}
As anticipated in \Cref{rem:dir}, assumption \eqref{hyp:dir} was made only to define $\Lambda_q$, but can be removed. Indeed, it is possible to consider a finite number of Cauchy data as boundary measurements, i.e.\ $( u_l|_{\partial \Omega},\frac{\partial u_l}{\partial \nu}|_{\partial \Omega} )_{l=1}^N$, where $(-\Delta + q)u_l = 0$ in $\Omega$, $l=1,\ldots, N$. In this case the same uniqueness and stability results hold, provided one makes use of the results of \cite{isaev2013}, where Alessandrini's type identities were obtained for a general impedance boundary map, also known as Robin-to-Robin map. This is a generalization of the DN map, and it can be defined even when $0$ is a Dirichlet eigenvalue for the Sch\"odinger operator.
\end{remark}

Without loss of generality, we can assume that $\Omega\subseteq\T^d$, where $\T=[0,1]$, and in the following we will extend any function of $L^\infty(\Omega)$ to $L^\infty(\T^d)$ by zero. Fix an arbitrary parameter $p\in (d,+\infty)$. In the rest of this subsection, with an abuse of notation, several different positive constants depending only on $d$, $p$ and $R$ will be denoted by the same  letter $c$. 

We now construct a special class of solutions to \eqref{eq:schr} as in  the seminal paper \cite{Sylvester1987}. For  $k \in \Z^d$, choose $\eta, \xi \in \R^d$ such that $|\xi| = |\eta|= 1$ and $\xi \cdot \eta = \xi \cdot k = \eta \cdot k = 0$. For $t\in\R$ define
\begin{align}
\label{def:zeta1}
\zeta_1^{k,t} = -i (\pi  k + t\xi) + \sqrt{t^2 +\pi^2|k|^2}\eta,\\ 
\label{def:zeta2} 
\zeta_2^{k,t} = -i(\pi k - t\xi) - \sqrt{t^2 +\pi^2|k|^2}\eta,
\end{align}
so that
\[
\zeta_j^{k,t}\cdot \zeta_j^{k,t} = 0 \quad \text{for } j =1,2,  \qquad \zeta_1^{k,t}+ \zeta_2^{k,t} =- 2\pi i k.
\]
For every $t\ge c$ we can construct a solution $\psi^{k,t}$ of \eqref{eq:schr} in $\R^d$ (with $q$ extended to $\R^d$ by zero) of the form
\[
\psi^{k,t}(x) = \psi(x,\zeta_1^{k,t}) = e^{\zeta_1^{k,t}\cdot x}(1+r^{k,t}(x)),\qquad x\in \R^d,
\]
in which the error term $r^{k,t}$ satisfies the estimates
\begin{align}
\label{eq:est_r}
&\|r^{k,t}\|_{L^2(\T^d)} \leq \frac{c}{t}, \\
\label{eq:est_r1}
&\|r^{k,t}\|_{L^p(\T^d)} \leq c,\\
 \label{eq:est_rgrad} &\|\nabla r^{k,t}\|_{L^2 (\T^d)}\leq c.
\end{align}
The first two bounds are given in \cite{paivarinta2004} (see Theorem~4.1 and Lemma~5.5). The third estimate is not explicitly mentioned in this paper, but follows immediately from \eqref{def:zeta1} and \eqref{eq:est_r} and using standard energy estimates for elliptic equations\footnote{
The error term $r^{k,t}$ solves $-\Delta r^{k,t} = \div(\zeta_1^{k,t} r^{k,t}) +q(r^{k,t}-1)$ in a ball $B\supseteq\T^d$, and so
\[
\|\nabla r^{k,t}\|_{L^2(\T^d)} \le c\left(|\zeta_1^{k,t}| \|r^{k,t}\|_{L^2(B)} +R \|r^{k,t}-1\|_{L^2(B)}+\|r^{k,t}\|_{L^2(B)}\right) \le c,
\]
where we used estimate \eqref{eq:est_r} in $B$.
}.

The solutions $\psi^{k,t}$ are known as exponentially growing solutions, Faddeev-type solutions \cite{faddeev1965} or complex geometrical optics  solutions. Note that $e^{\zeta_2^{k,t_{k}}\cdot x}$ is harmonic in $\R^d$.

It is useful to consider an ordering of the frequencies in $\Z^d$, namely a bijective map $\rho\colon\N\to \Z^d$, $l\mapsto k_l$.
For each $k \in \Z^d$ fix {$t_k\ge c$} and define the measurement operator $U\colon L^\infty(\T^d) \to \ell^\infty$ by
\begin{equation}\label{def:U_EIT}
\begin{split}
(U ( q))_l&:= \int_{\T^d} q(x) e^{\zeta_2^{k_l,t_{k_l}}\cdot x} \, \psi^{k_l,t_{k_l}}(x) \,dx \\
&= \langle  e^{\zeta_2^{k_l,t_{k_l}}\cdot x}, (\Lambda_q - \Lambda_{0})\psi^{k_l,t_{k_l}} \rangle_{{ H^{\frac12}(\partial \Omega) \times  H^{-\frac12}(\partial \Omega)}},
\end{split}
\end{equation}
where the second identity follows from \eqref{eq:alessandrini} (and is valid only whenever \eqref{hyp:dir} holds true and $q=0$ almost everywhere in $\T^d\setminus\Omega$). Note that the operator $U$ is nonlinear, since the solution $\psi^{k_l,t_{k_l}}$ depends on $q$. In the literature, this operator is also known as nonlinear Fourier transform or generalized scattering transform or amplitude. Using the same ordering of $\Z^d$, we define the discrete Fourier transform $F\colon \H \to \ell^2$ by
\begin{equation}\label{def:DFT}
F( q):=\int_{\T^d} q(x)e^{-2\pi ik_l\cdot x}\,dx=(\langle q, e_{k_l} \rangle)_l,
\end{equation}
where $e_k(x)=e^{2\pi ik\cdot x}$, and the nonlinear operator $B: L^\infty(\T^d) \to \ell^\infty$  by
\begin{equation*}
B(q) := (\langle q, e_{k_l}r_l \rangle)_l,
\end{equation*}
where $r_l = r^{k_l,t_{k_l}}$, so that $U = F +B$.

The first global uniqueness proof for the 3D Calder\'on problem \cite{Sylvester1987} consisted in showing that $U \to F$ as the parameter $t \to +\infty$. In our case we cannot do that, since we want to be able to select a finite number of measurements from the operator $U$, which depends on the choice of the fixed parameters $t_{k_l}$.

In order to prove a uniqueness result with a finite number of measurements we will use a fixed-point argument, based on the following lemma.

\begin{lemma}\label{lem:contr}
Take $s>\frac{dp}{2(p-d)}$. There exists $c'>0$ depending only on $d$, $s$, $p$ and $R$, such that if $t_k= c'(|k|^s+1)$ for every $k\in\Z^d$ then
\[
\|B(q_2) - B(q_1)\|_{\ell^2} \leq \frac12 \|q_2 - q_1\|_\H
\]
for every $q_1,q_2\in L^\infty(\T^d)$ such that $\|q_i\|_{L^\infty(\T^d)} \leq R$, $i = 1,2$. In other words, the operator $B$ restricted to the closed ball $\overline{B}(0,R)$ of $L^{\infty}(\T^d)$ is a contraction.
\end{lemma}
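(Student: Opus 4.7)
The plan is to split $B(q_2)-B(q_1)$ into a linear-in-$q$ piece and a remainder-Lipschitz piece and bound each separately. Using $\zeta_1^{k_l,t_{k_l}}+\zeta_2^{k_l,t_{k_l}}=-2\pi i k_l$ one rewrites $B(q)_l = \int_{\T^d}q(x)\,e^{-2\pi i k_l\cdot x}\,r_l^{q}(x)\,dx$, where $r_l^q:=r^{k_l,t_{k_l}}$ denotes the CGO remainder associated with potential $q$. Telescoping gives
\begin{equation*}
B(q_2)_l - B(q_1)_l \;=\; \int_{\T^d}(q_2-q_1)\,e^{-2\pi i k_l\cdot x}\,r_l^{q_1}\,dx \;+\; \int_{\T^d}q_2\,e^{-2\pi i k_l\cdot x}\,(r_l^{q_2}-r_l^{q_1})\,dx \;=:\; A_l + D_l,
\end{equation*}
and the task becomes to show $\|A\|_{\ell^2},\|D\|_{\ell^2}\le \tfrac14\|q_2-q_1\|_{\H}$ when $c'$ is large enough.

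The term $A$ is the easy one: Cauchy--Schwarz combined with the decay estimate \eqref{eq:est_r} gives $|A_l|\le(c/t_{k_l})\|q_2-q_1\|_{\H}$, so that $\|A\|_{\ell^2}^2\le c^2\|q_2-q_1\|_{\H}^2\sum_{k\in\Zd}t_k^{-2}$; since $s>dp/(2(p-d))>d/2$, the series converges, and the prefactor can be made arbitrarily small by enlarging $c'$.

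The term $D$ is the main obstacle, and this is where the precise hypothesis on $s$ enters. First I would derive a Lipschitz identity for the remainder: each $r_l^{q_i}$ satisfies the integral equation $(I+G_{\zeta_1^{k_l,t_{k_l}}}M_{q_i})r_l^{q_i}=-G_{\zeta_1^{k_l,t_{k_l}}}q_i$, where $G_\zeta$ is the (periodic) Faddeev Green's operator for $-\Delta-2\zeta\cdot\nabla$ and $M_q$ denotes multiplication by $q$; subtracting the two cases yields
\begin{equation*}
r_l^{q_2}-r_l^{q_1} \;=\; -\bigl(I+G_{\zeta_1^{k_l,t_{k_l}}}M_{q_2}\bigr)^{-1}\,G_{\zeta_1^{k_l,t_{k_l}}}\bigl((q_2-q_1)(1+r_l^{q_1})\bigr).
\end{equation*}
To turn this into a quantitative bound, I would invoke the H\"ahner--P\"aiv\"arinta $L^r$-to-$L^2$ mapping estimate $\|G_\zeta f\|_{\H}\le c\,|\zeta|^{d/p-1}\|f\|_{L^r(\T^d)}$ with $\tfrac1r=\tfrac12+\tfrac1p$, valid for $|\zeta|$ large (this same regime makes $I+G_\zeta M_{q_2}$ boundedly invertible on $\H$, via the standard bound $\|G_\zeta\|_{\H\to\H}\le c/|\zeta|$ and $\|q_2\|_{L^\infty}\le R$). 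Combining this with H\"older's inequality and the uniform $L^p$ bound \eqref{eq:est_r1} gives $\|(q_2-q_1)(1+r_l^{q_1})\|_{L^r}\le c\,\|q_2-q_1\|_{\H}$, whence, using $|\zeta_1^{k_l,t_{k_l}}|\ge t_{k_l}$,
\begin{equation*}
\|r_l^{q_2}-r_l^{q_1}\|_{\H} \;\le\; c\,t_{k_l}^{d/p-1}\,\|q_2-q_1\|_{\H}.
\end{equation*}
Since $|D_l|\le R\,\|r_l^{q_2}-r_l^{q_1}\|_{\H}$, the $\ell^2$-norm of $D$ is controlled by $\|q_2-q_1\|_{\H}\cdot\bigl(\sum_{k\in\Zd}t_k^{-2(1-d/p)}\bigr)^{1/2}$; convergence of this series is equivalent to $2s(1-d/p)>d$, that is, precisely $s>\frac{dp}{2(p-d)}$, and the prefactor shrinks once more when $c'$ is enlarged.

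Assembling both estimates and choosing $c'$ so that each prefactor is at most $\tfrac14$ yields the contraction $\|B(q_2)-B(q_1)\|_{\ell^2}\le\tfrac12\|q_2-q_1\|_{\H}$. The principal technical step is securing the $L^r$-to-$L^2$ mapping estimate for the periodic Faddeev Green's operator with the sharp decay rate $|\zeta|^{d/p-1}$; the formulation and proof follow the analysis of \cite{paivarinta2004} adapted to $\T^d$, and it is exactly the $L^p$ bound \eqref{eq:est_r1} (which enforces $p>d$) that makes the Hölder product in this estimate land in an $L^r$ space with $r>1$.
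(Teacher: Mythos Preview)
Your proof is correct and follows essentially the same route as the paper: the same telescoping splitting into a linear-in-$q$ piece and a remainder-difference piece, the same $L^{2p/(p+2)}\to L^2$ mapping estimate for the Faddeev Green's operator (this is exactly \cite[Theorem~4.1]{paivarinta2004}, which the paper quotes directly rather than rederiving via invertibility of $I+G_\zeta M_{q_2}$), and the same summability criterion $2s(1-d/p)>d$. One minor clarification: the CGO remainders and $G_\zeta$ are constructed on $\R^d$ with $q$ extended by zero, so no periodic adaptation of the Green's operator is needed---the $L^2(\T^d)$ and $L^p(\T^d)$ bounds are simply restrictions of the $\R^d$ estimates.
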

\begin{proof}
We have, from the definition of $B$,
\begin{align*}
(B(q_2) - B(q_1))_l = \langle q_2 - q_1, e_l r_l(q_2) \rangle + \langle q_1, e_l ( r_l(q_2)-r_l(q_1))\rangle,
\end{align*}
where we called $e_l = e_{k_l}$ and emphasized the dependence on $q_i$. By \eqref{eq:est_r} and the assumption $t_k = c'(|k|^s+1)$, we obtain
\begin{equation}\label{eq:first}
\begin{split}
|(B(q_2) - B(q_1))_l| &\leq \!\|q_2 - q_1\|_\H \| r_l(q_2)\|_\H \!+ \!\| q_1\|_\H \| r_l(q_2)-r_l(q_1)\|_\H\\
&\leq \|q_2 - q_1\|_\H \frac{c}{c' (|k_l|^s+1)} +R\| r_l(q_2)-r_l(q_1)\|_\H.
\end{split}
\end{equation}

Now, following \cite{Sylvester1987}, let $L w = \Delta w + \zeta_1^{k_l,t_{k_l}} \cdot \nabla w$. The remainders $r_l(q_i)$ satisfy the equations
\[
L r_l(q_i) -q_i r_l(q_i) = q_i, \qquad i = 1,2,
\]
so that the difference $r_l = r_l(q_2)-r_l(q_1)$ satisfies
\begin{equation}\label{eq:difr}
L r_l -q_2 r_l = (q_2-q_1)(1+r_l(q_1))\quad \text{in $\R^d$}.
\end{equation}
Applying \cite[Theorem~4.1]{paivarinta2004} to \eqref{eq:difr} gives
\begin{equation*}
\begin{split}
\| r_l(q_2)-r_l(q_1)\|_\H &\leq c \frac{\|(q_2 -q_1)(1+r_l(q_1))\|_{L^{\frac{2p}{p+2}}(\T^d)}}{|t_{k_l}|^{1-\frac d p}}\\
&
\leq c \frac{\|(q_2 -q_1)\|_\H\|1+r_l(q_1)\|_{L^{p}(\T^d)}}{|t_{k_l}|^{1-\frac d p}}.
\end{split}
\end{equation*}
As a consequence, by \eqref{eq:est_r1} we obtain
\[
\| r_l(q_2)-r_l(q_1)\|_\H \leq \frac{c}{\bigl(c' (|k_l|^s+1)\bigr)^{1-\frac{d}{p}}} \|q_2-q_1\|_\H.
\]
Inserting this estimate into \eqref{eq:first} yields
\[
|(B(q_2) - B(q_1))_l| \leq \frac{c}{\bigl(c' (|k_l|^s+1)\bigr)^{1-\frac{d}{p}}} \|q_2-q_1\|_\H .
\]
Since $2s-\frac{2sd}{p}>d$, the series
\[
\gamma_s^2:=\sum_{l\in\N} \frac{1}{(|k_l|^s+1)^{2-\frac{2d}{p}}} = 
\sum_{k\in\Z^d} \frac{1}{(|k|^s+1)^{2-\frac{2d}{p}}}
\]
is convergent, and so we finally obtain
\[
\|B(q_2) - B(q_1)\|_{\ell^2} \leq \frac{c\gamma_s }{(c')^{1-\frac{d}{p}}} \|q_2-q_1\|_\H .
\]
Choosing $c' \ge  (2c\gamma_s)^{\frac{p}{p-d}}$ yields the desired result.
\end{proof}

The next result shows that, given a known finite dimensional subspace $\W$ of $L^\infty(\Omega)$, there exists a number $N$ such that two potentials in $\W$ satisfying $(U(q_1))_l = (U(q_2))_l$ for $l = 1, \dots, N$ must coincide. Let $P_N\colon\ell^\infty\to\ell^\infty$ be the projection onto the first $N$ components, namely $P_N(a_1,a_2,\dots)=(a_1,\dots,a_N,0,0,\dots)$, and $P_\W\colon L^2(\T^d)\to L^2(\T^d)$ be the orthogonal projection onto $i(\W)$, where $i\colon L^\infty(\Omega)\to L^2(\T^d)$ is the extension operator by zero.

\begin{proposition}\label{prop:main}
Fix $t_k= c'(|k|^s+1)$ for every $k\in\Z^d$ as in Lemma~\ref{lem:contr}. There exists $N\in\N$ depending only on $\W$ such that the following is true.
For any $q_1, q_2 \in \W$ with $\norm{q_i}_{L^\infty(\T^d)}\le R$ we have
\begin{equation*}
\norm{q_1-q_2}_\H \le 4\norm{P_NU(q_1)-P_NU(q_2)}_{\ell^2}.
\end{equation*}
\end{proposition}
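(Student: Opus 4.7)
The plan is to use the decomposition $U=F+B$ together with the fact that $F\colon \H\to\ell^2$ is a (discrete) isometry while $B$, by \Cref{lem:contr}, is a contraction with constant $\tfrac12$ on $\overline{B}(0,R)\subseteq L^\infty(\T^d)$. Writing $q:=q_1-q_2\in\W$, one has
\begin{equation*}
P_NU(q_1)-P_NU(q_2)=P_NF(q)+P_N\bigl(B(q_1)-B(q_2)\bigr),
\end{equation*}
and since $P_N$ is a projection while \Cref{lem:contr} controls the $\ell^2$-norm of the $B$-difference by $\tfrac12\|q\|_\H$, the triangle inequality yields
\begin{equation*}
\|P_NU(q_1)-P_NU(q_2)\|_{\ell^2}\geq \|P_NF(q)\|_{\ell^2}-\tfrac12\|q\|_\H.
\end{equation*}

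Next I would bound $\|P_NF(q)\|_{\ell^2}$ from below. By Parseval's identity applied to the periodic extension by zero, $\|P_NF(q)\|_{\ell^2}^2=\|q\|_\H^2-\|(I-P_N)F(q)\|_{\ell^2}^2$, so it suffices to show that the Fourier tail $\|(I-P_N)F(q)\|_{\ell^2}$ can be made uniformly small on $\W$. This is where finite dimensionality enters: for each fixed $q\in\W$ the tail tends to $0$ as $N\to\infty$, and compactness of the unit sphere of the finite-dimensional space $\W$ upgrades this pointwise convergence to uniform convergence. Hence for any $\epsilon>0$ there exists $N=N(\W,\epsilon)\in\N$ such that $\|(I-P_N)F(q)\|_{\ell^2}\leq\epsilon\|q\|_\H$ for every $q\in\W$. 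Note the threshold $\epsilon$ is an absolute constant, so $N$ depends only on $\W$, as claimed.

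Taking $\epsilon=\tfrac12$ gives $\|P_NF(q)\|_{\ell^2}\geq \tfrac{\sqrt{3}}{2}\|q\|_\H \geq \tfrac34\|q\|_\H$, and substituting back,
\begin{equation*}
\|P_NU(q_1)-P_NU(q_2)\|_{\ell^2}\geq \bigl(\tfrac34-\tfrac12\bigr)\|q\|_\H=\tfrac14\|q\|_\H,
\end{equation*}
which is the desired estimate after multiplying by $4$. The main point to handle carefully is the \emph{quantitative} Fourier-tail estimate on $\W$, since this is what determines the explicit dependence of $N$ on $\W$ referred to in \Cref{rem:balancing} and underlies the concrete bounds for specific subspaces in \Cref{sec:examples}; qualitatively, compactness suffices, but a usable formula requires either the known smoothness of the elements of $\W$ (e.g.\ bandlimited or piecewise-polynomial functions) or a covering/dimension argument. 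Apart from this the argument is a clean combination of \Cref{lem:contr} with Parseval, and it uses essentially that $\W$ is a \emph{linear} subspace, so that the difference $q_1-q_2$ again lies in $\W$.
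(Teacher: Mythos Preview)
Your proof is correct and follows essentially the same approach as the paper: decompose $U=F+B$, use the contraction bound from \Cref{lem:contr} for the $B$-part, and exploit finite dimensionality of $\W$ (via compactness of its unit sphere, equivalently the paper's ``$P_N^\perp\to 0$ strongly and $FP_\W$ has finite rank'') to make the Fourier tail $\|P_N^\perp F P_\W\|$ small. The only cosmetic difference is that you use the reverse triangle inequality together with Pythagoras, which lets you get away with the threshold $\|P_N^\perp FP_\W\|\le\tfrac12$, while the paper's direct triangle-inequality route leads to the slightly stronger requirement $\|P_N^\perp FP_\W\|\le\tfrac14$ recorded as \eqref{eq:balancing}.
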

\begin{proof}
From the identity $U=F+B$ and the assumptions on $q_1,q_2$, we readily obtain
\[
\begin{split}
P_NU(q_1)-P_NU(q_2)&=P_NF(q_1-q_2)+P_N(B(q_1)-B(q_2))\\
&=F(q_1-q_2)-P_N^\perp F(q_1-q_2)+P_N(B(q_1)-B(q_2)),
\end{split}
\]
where $P_N^\perp=I-P_N$, and thus
\[
F(q_1-q_2)=(P_NU(q_1)-P_NU(q_2)) + P_N^\perp F(q_1-q_2) - P_N(B(q_1)-B(q_2)).
\]
Using the fact that $F$ is unitary  and Lemma~\ref{lem:contr} we have
\[
\begin{split}
\|q_1-q_2&\|_\H = \|F(q_1-q_2)\|_{\ell^2} \\
&\leq \|P_N U(q_1)-P_N U(q_2)\|_{\ell^2}+\|P_N^\perp F(q_1-q_2)\|_{\ell^2} +\|P_N (B(q_2)-B(q_1))\|_{\ell^2}\\
&\leq \|P_N U(q_1)-P_N U(q_2)\|_{\ell^2}+\|P_N^\perp F(q_1-q_2)\|_{\ell^2}+\frac 1 2 \|q_1-q_2\|_\H,
\end{split}
\]
which gives, using the fact that $q_1,q_2 \in \W$,
\begin{equation*}
\|q_1-q_2 \|_\H \leq 2\|P_N^\perp F P_\W(q_1-q_2)\|_{\ell^2} + 2 \|P_N U(q_1)-P_N U(q_2)\|_{\ell^2}.
\end{equation*}

Now, since $P_N^\perp \to 0$ strongly as $N \to \infty$ and $FP_\W$ is a finite rank operator, there exists $N$ such that 
\begin{equation}\label{eq:balancing}
\|P_N^\perp F P_\W\|_{\H \to \ell^2} \leq \frac 1 4.
\end{equation}
This immediately yields the final estimate
\begin{equation*}
\|q_1-q_2\|_\H \leq 4 \|P_N U(q_1)-P_N U(q_2)\|_{\ell^2}. \qedhere
\end{equation*}
\end{proof}

\begin{remark}\label{rem:manifold}
It is natural to wonder whether one may extend the uniqueness result presented in this paper to the case when $\W$ is a finite dimensional submanifold of $L^\infty(\Omega)$. While this remains a very interesting open question to investigate, it is clear that, in such generality, the current proof would not work. Indeed, when $\Omega=\T^d$, for the one-dimensional manifold
\[
\W=\{x\mapsto e^{2\pi i \xi x_1}:\xi\in\R\}\subseteq L^\infty(\T^d),
\]
we immediately have that
\[
\sup_{q\in \W} \norm{P_N^\perp F q}_{\ell^2} = 1,
\]
and so an inequality like \eqref{eq:balancing} cannot hold.
\end{remark}

The next result is a more precise version of Theorem~\ref{theo:main}, where we show how having the same boundary measurements yields $P_N U(q_1) = P_N U(q_2)$.

\begin{theorem}\label{theo:main2}
Take $d\ge 3$ and let $\Omega \subseteq \R^d$ be a  bounded Lipschitz domain and $\W \subseteq L^\infty(\Omega)$ be a finite dimensional subspace. There exists $N\in\N$ such that the following is true.

Take $R>0$ and $q_1,q_2 \in \W$ satisfying $\norm{q_j}_{L^\infty(\Omega)}\le R$ and \eqref{hyp:dir}  ($j=1,2$) and let $f_l = \psi_1^{k_l,t_{k_l}}|_{\partial \Omega}$, where $\psi_1^{k_l,t_{k_l}}$, $l \in \N$, are the CGO solutions corresponding to $q_1$.
\begin{center}
If $\Lambda_{q_1} f_l = \Lambda_{q_2} f_l$ for $l=1,\ldots,N$, then $q_1 = q_2$.
\end{center}
\end{theorem}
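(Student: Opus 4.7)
The plan is to reduce Theorem~\ref{theo:main2} to a fixed-point argument of the same flavor as Proposition~\ref{prop:main}, after bypassing the fact that $(U(q_2))_l$ is \emph{not} directly measurable from the available data. Since $f_l=\psi_1^{k_l,t_{k_l}}|_{\partial\Omega}$ is the CGO boundary trace for $q_1$, Alessandrini's identity \eqref{eq:alessandrini} immediately gives $(U(q_1))_l=\langle e^{\zeta_2^{k_l,t_{k_l}}},(\Lambda_{q_1}-\Lambda_0)f_l\rangle$, which is determined by $\Lambda_{q_1}f_l$. However, the analogous expression for $(U(q_2))_l$ would require the unknown boundary trace $\psi_2^{k_l,t_{k_l}}|_{\partial\Omega}$ of the CGO associated to $q_2$, so Proposition~\ref{prop:main} cannot be applied verbatim.

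The substitute is the bilinear identity
\[
\int_\Omega (q_1-q_2)\,u_1 u_2\,dx \;=\; \langle u_2|_{\partial\Omega},(\Lambda_{q_1}-\Lambda_{q_2})(u_1|_{\partial\Omega})\rangle,
\]
valid whenever $(-\Delta+q_i)u_i=0$ in $\Omega$, and obtained from Green's second identity combined with the self-adjointness of the DN map. I would take $u_1=\psi_1^{k_l,t_{k_l}}$ (so that $u_1|_{\partial\Omega}=f_l$) and build $u_2$ as a CGO for $q_2$ with the \emph{other} phase, namely $u_2(x)=e^{\zeta_2^{k_l,t_{k_l}}\cdot x}(1+\tilde r^{k_l,t_{k_l}}(x))$. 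Since $\zeta_2\cdot\zeta_2=0$ and $|\Ree\,\zeta_2^{k,t}|=|\Ree\,\zeta_1^{k,t}|$, the construction and estimates of \cite{paivarinta2004} apply to $\zeta_2^{k,t}$ exactly as to $\zeta_1^{k,t}$, and $\tilde r$ satisfies the analogs of \eqref{eq:est_r}--\eqref{eq:est_rgrad}. The hypothesis $\Lambda_{q_1}f_l=\Lambda_{q_2}f_l$ for $l\le N$ together with $\zeta_1^{k_l,t_{k_l}}+\zeta_2^{k_l,t_{k_l}}=-2\pi i k_l$ then yields, writing $r=r^{k_l,t_{k_l}}(q_1)$ and $h=q_1-q_2$,
\[
(Fh)_l+(\mathcal B h)_l \;=\; \int_\Omega h\,e^{-2\pi i k_l\cdot x}\bigl(1+r+\tilde r+r\tilde r\bigr)\,dx \;=\; 0, \qquad l=1,\dots,N,
\]
where $\mathcal B h$ collects the three remainder contributions.

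A variant of Lemma~\ref{lem:contr} then gives $\|\mathcal B h\|_{\ell^2}\le\tfrac14\|h\|_{L^2(\T^d)}$ provided $c'$ is taken large enough in $t_k=c'(|k|^s+1)$: the two single-remainder pieces are estimated precisely as in the proof of that lemma, while the cross term $r\tilde r$ is handled by H\"older combined with interpolation between \eqref{eq:est_r} and \eqref{eq:est_r1} (possibly after enlarging the free parameter $p$), producing a bound $\|r\tilde r\|_{L^2}\le c\,t_{k_l}^{-\theta}$ for some $\theta>0$. The argument is then concluded exactly as in Proposition~\ref{prop:main}: from $h\in\W$, $P_NFh=-P_N\mathcal B h$ and the unitarity of $F$ we get
\[
\|h\|_{L^2} = \|Fh\|_{\ell^2} \le \|\mathcal B h\|_{\ell^2} + \|P_N^\perp F P_\W\|_{L^2\to\ell^2}\|h\|_{L^2} \le \tfrac12\|h\|_{L^2}
\]
once $N$ is enlarged so that $\|P_N^\perp F P_\W\|\le\tfrac14$, forcing $q_1=q_2$. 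The principal technical obstacle is the cross term $r\tilde r$: whereas Lemma~\ref{lem:contr} deals only with single remainders, here we must control a product of two, which is what imposes the $L^p$-interpolation step and, in turn, affects the required growth rate $s$ of $t_k$.
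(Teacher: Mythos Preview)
Your argument is correct and follows a genuinely different route from the paper. The paper does not introduce a mixed CGO pairing or a cross term; instead it invokes the boundary integral equation characterizing CGO traces via the Faddeev--Green function (results of Novikov and Nachman): since $\psi_j^l|_{\partial\Omega}$ is the \emph{unique} $H^{1/2}$ solution of
\[
\psi(x)=e^{\zeta_1^{k_l,t_{k_l}}\cdot x}+\int_{\partial\Omega}G(x-y,\zeta_1^{k_l,t_{k_l}})(\Lambda_{q_j}-\Lambda_0)\psi(y)\,d\sigma(y),\qquad x\in\partial\Omega,
\]
the hypothesis $\Lambda_{q_1}f_l=\Lambda_{q_2}f_l$ forces $\psi_1^l|_{\partial\Omega}=\psi_2^l|_{\partial\Omega}$, and then $(U(q_1))_l=(U(q_2))_l$ for $l\le N$, so Proposition~\ref{prop:main} applies directly with no new estimates. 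Your approach trades this external machinery for a purely interior computation: the bilinear Alessandrini identity with $u_1=\psi_1(\cdot,\zeta_1)$ and $u_2=\psi_2(\cdot,\zeta_2)$ is more elementary and self-contained, and the single-remainder terms are in fact \emph{easier} than Lemma~\ref{lem:contr} (you only need $|\langle h,e_l r\rangle|\le\|h\|_{L^2}\|r\|_{L^2}\le c\,t_{k_l}^{-1}\|h\|_{L^2}$, without the nonlinear difference $r(q_2)-r(q_1)$). The price is the cross term $r\tilde r$, whose treatment via $L^2$--$L^p$ interpolation is correct (take $p>4$, get $\|r\|_{L^4}\le c\,t^{-\theta}$ with $\theta=\tfrac{p-4}{2(p-2)}$) but does tighten the admissible range of $s$ and $c'$. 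In short: the paper's proof is cleaner once the Nachman--Novikov boundary equation is taken as a black box; yours avoids that black box at the cost of one extra, but routine, product estimate.
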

\begin{proof}
Let $\psi_j^l = \psi_j^{k_l,t_{k_l}}$ be the CGO solutions corresponding to $q_j$, $j=1,2$ and $N$ be as in Proposition~\ref{prop:main}. We claim that $\psi_1^l|_{\partial \Omega} = \psi_2^l|_{\partial \Omega}$, for $l =1,\dots, N$. Indeed, $\psi_j^l|_{\partial \Omega}$ can be characterized as the unique solution in $H^{1/2}(\partial \Omega)$ of the boundary integral equation
\begin{equation}\label{eq:BIE}
\psi_j^l(x) = e^{\zeta_1^{k_l,t_{k_l}}\cdot x} + \int_{\partial \Omega} G(x-y,\zeta_1^{k_l,t_{k_l}})(\Lambda_{q_j} - \Lambda_0)\psi_j^l(y) d\sigma (y),\quad x \in \partial \Omega,
\end{equation}
for $j=1,2$, where $G(x,\zeta)$ is the Faddeev-Green function: see \cite[Proposition 2]{Novikov1988}, \cite[Theorem 1.4]{nachman1988}, and \cite[Theorem 5]{Nachman1996}. Note that we only need \cite{Nachman1996} to extend the results of \cite{nachman1988} to Lipschitz domains and to take $H^{1/2}(\partial \Omega)$ as domain for the boundary integral equation \eqref{eq:BIE}.

Thus, for $l = 1,\dots, N$, since $\Lambda_{q_1} (\psi_1^l) = \Lambda_{q_2} (\psi_1^l)$, $\psi_1^l|_{\partial \Omega}$ satisfies
\begin{equation*}
\psi_1^l(x) = e^{\zeta_1^{k_l,t_{k_l}}\cdot x} + \int_{\partial \Omega} G(x-y,\zeta_1^{k_l,t_{k_l}})(\Lambda_{q_2} - \Lambda_0)\psi_1^l(y) d\sigma (y),\quad x \in \partial \Omega,
\end{equation*}
which yields $\psi_1^l|_{\partial \Omega} = \psi_2^l|_{\partial \Omega}$ because of the unique solvability of \eqref{eq:BIE} for $j =2$. This readily gives
\begin{align*}
(U(q_1))_l &=\langle  e^{\zeta_2^{k_l,t_{k_l}}\cdot x}, (\Lambda_{q_1} - \Lambda_{0})\psi_1^l \rangle_{{ H^{\frac12}(\partial \Omega) \times  H^{-\frac12}(\partial \Omega)}} \\
& = \langle  e^{\zeta_2^{k_l,t_{k_l}}\cdot x}, (\Lambda_{q_2} - \Lambda_{0})\psi_2^l \rangle_{{ H^{\frac12}(\partial \Omega) \times  H^{-\frac12}(\partial \Omega)}} \\
& = (U(q_2))_l, 
\end{align*}
for $l = 1,\dots,N$, i.e.\ $P_N U(q_1) = P_N U(q_2)$. Finally, by Proposition~\ref{prop:main} we obtain $q_1=q_2$.
\end{proof}

We now pass to the proof of the stability estimate.

\begin{proof}[Proof of Theorem \ref{theo:stab}] During the proof, several positive constants depending only on $\Omega$, $R$, $\alpha$ and $\rho$ will be denoted by the same letter $C$. As in Theorem~\ref{theo:main2}, we let $f_l = \psi_1^{k_l,t_{k_l}}|_{\partial \Omega}$, for $l =1,\ldots,N$, where we make the choices $s=\frac{d}{2}+\alpha d$ and $p=d+1+\frac{d}{2\alpha}$, so that $s>\frac{dp}{2(p-d)}$. We also choose a particular ordering $\rho:l\in \N\mapsto k_l\in\Z^d$ of the frequencies: suppose that 
\begin{equation}\label{cond:rho}
|k_l| \leq  C_\rho \,l^{1/d} \text{ for some } C_\rho > 0.
\end{equation}

From the definition of $U$,  using \cite[Theorem 1]{novikov2004}, for $l=1,\dots, N$ we have the identity:
\begin{equation*}
(U(q_1))_l - (U(q_2))_l =\langle \psi_2(\cdot,\zeta_2^{k_l,t_{k_l}}), (\Lambda_{q_1} - \Lambda_{q_2})\psi_1^l\rangle_{{ H^{\frac12}(\partial \Omega) \times  H^{-\frac12}(\partial \Omega)}}.
\end{equation*}
This is a particular case of identity (2.8) of \cite{novikov2004}, with a different notation: $U(q)_l$ corresponds to $h(-i\zeta_1^{k_l,t_{k_l}}, i\zeta_2^{k_l,t_{k_l}})$, $\psi(x,\zeta)$ corresponds to $\psi(x,-i\zeta)$ and the operator $\Lambda_q$ to $\Phi$.

We then readily obtain
\begin{equation*} 
|(U(q_1))_l - (U(q_2))_l| \leq \|\psi_2(\cdot,\zeta_2^{k_l,t_{k_l}}) \|_{H^{1/2}(\partial \Omega)} \| (\Lambda_{q_1} - \Lambda_{q_2})\psi_1^l\|_{H^{-1/2}(\partial \Omega)}.
\end{equation*}
The first term can be bounded using the trace theorem \cite{ding1996}:
\begin{align*}
\|\psi_2(\cdot,\zeta_2^{k_l,t_{k_l}}) \|_{H^{1/2}(\partial \Omega)} &\leq  \|\psi_2(\cdot,\zeta_2^{k_l,t_{k_l}}) \|_{H^{1}( \Omega)} \\
&\leq \| e^{\zeta_2^{k_l,t_{k_l}}\cdot x} (1+r_l(q_2,\zeta_2^{k_l,t_{k_l}}))\|_{H^{1}(\Omega)}\\
&\leq C e^{C |k_l|^s} \left(\| 1 \|_{H^{1}(\Omega)} +\|r_l(q_2,\zeta_2^{k_l,t_{k_l}})\|_{H^{1}(\Omega)}\right)\\
& \le  Ce^{C l^{\frac{s}{d}}},
\end{align*}
where we used 
 \eqref{def:zeta2}, \eqref{cond:rho}, \eqref{eq:est_r}, the assumptions on $t_{k_l}$ and the boundedness of $\Omega$.

We have found
\begin{equation*}
|(U(q_1))_l - (U(q_2))_l| \leq C\, e^{Cl^{\frac{s}{d}}}\| (\Lambda_{q_1} - \Lambda_{q_2})\psi_1^l\|_{H^{-1/2}(\partial \Omega)},
\end{equation*}
for $l=1,\dots, N$, which gives
\begin{align*}
\|P_N U(q_1)-P_N U(q_2)\|_{\ell^2} &\leq C\, e^{CN^{\frac{s}{d}}}\sqrt{\sum_{l=1}^N  \|(\Lambda_{q_1} - \Lambda_{q_2})\psi_1^l\|_{H^{-1/2}(\partial \Omega)}^2}\\
&= C\, e^{CN^{\frac{1}{2}+\alpha}}\norm{\left((\Lambda_{q_1} - \Lambda_{q_2})\psi_1^l\right)_{l=1}^N}_{H^{-1/2}(\partial \Omega)^N},
\end{align*}
where we set 
$
\norm{(\varphi_l)_l}^2_{H^{-1/2}(\partial \Omega)^N}:=\sum_{l=1}^N  \|\varphi_l\|_{H^{-1/2}(\partial \Omega)}^2
$. The proof follows from the last estimate and Proposition~\ref{prop:main}.
\end{proof}

\begin{remark}
The proof of \Cref{theo:stab} makes use of a particular ordering of the frequencies $\rho$ satisfying \eqref{cond:rho}. This yields the explicit stability constant $e^{CN^{\frac{1}{2}+\alpha}}$. However, the stability result remains valid for an arbitrary ordering $\rho$, and the constant becomes
\[
\exp\left(C\max(|k_1|^s,\dots,|k_N|^s)\right),
\]
where $s$ is an arbitrary parameter larger than $\frac d 2$.
\end{remark}

\begin{proof}[Proof of Corollary \ref{cor:uni}]
Using the Liouville transformation $u = \tilde u / \sqrt{\sigma}$, if $u$ solves the conductivity equation $\Div (\sigma \nabla u) =0$, then $\tilde u$ solves the Schr\"odinger equation $(-\Delta+ q) \tilde u=0$, with $q = \Delta \sqrt{\sigma}/\sqrt{\sigma}$.
 
Now, since we have $\sigma_j = 1$ in a neighborhood of $\partial \Omega$, we have that $\Lambda_{\sigma_j} = \Lambda_{q_j}$, $q_j = \frac{\Delta \sqrt{\sigma_j}}{\sqrt{\sigma_j}}$, for $j=1,2$, because of the well-known identity $\Lambda_q = \sigma^{-1/2}(\Lambda_{\sigma} + \frac{1}{2}\frac{\partial \sigma}{\partial \nu}) \sigma^{-1/2}$, which follows from the Liouville transformation.
Theorem~\ref{theo:main} immediately yields $q_1 = q_2$, and since $\sigma_j$ are solutions to $(\Delta - q_j)\sqrt{\sigma_j} = 0$ in $\Omega$ with $\sigma_j|_{\partial \Omega} = 1$, we have $\sigma_1 = \sigma_2$.
\end{proof}

\begin{proof}[Proof of Corollary \ref{cor:stab}]
Arguing as in the proof of \cite[Theorem 1]{alessandrini1988} we have the following identity
\begin{equation*}
\Div ( (\sigma_1 \sigma_2)^{1/2} \nabla \log(\sigma_1 / \sigma_2) ) = 2 (\sigma_1 \sigma_2)^{1/2}(q_1-q_2) \quad \text{in } \Omega.
\end{equation*}
Now, using \eqref{eq:elliptic} we find
\begin{equation*}\label{eq:estsq}
\|\sigma_1-\sigma_2\|_{L^2(\Omega)} \leq C(\lambda,\Omega) \|\log(\sigma_1 / \sigma_2)\|_{L^2(\Omega)} \leq C(\lambda,\Omega) \|q_1-q_2\|_{L^2(\Omega)}.
\end{equation*}
Thus, the proof follows from the estimate of Theorem~\ref{theo:stab} and the fact that $\Lambda_{\sigma_j} = \Lambda_{q_j}$, $j =1,2$.
\end{proof}

\section{Reconstruction}\label{sec:rec}
The results of Section~\ref{sec:proofs} can be used to design an iterative nonlinear reconstruction algorithm and to show that it is globally convergent. For simplicity, we consider directly  $\Omega=\T^d$, but the general case may be  handled by using the extension operator as above.

Given a finite dimensional subspace $\W \subseteq L^{\infty}(\T^d)$ and $R>0$, define
\[
\W_R:=\{q\in \W:\norm{q}_{L^\infty(\T^d)}\le R\},
\]
equipped with the $L^2$ norm. For $N\in \N$ and $y \in \ell^2$, we define the nonlinear operator $A:\W_R \to \W_R$ by
\begin{equation}\label{def:A}
A(q) = P_{\W_R}( F^{-1} y + F^{-1}P_N^\perp Fq - F^{-1}P_N B  (q)),
\end{equation}
where $F$, $B$,  $P_N$ and $P_N^\perp$ were defined in Section~\ref{sec:proofs} and $P_{\W_R}$ is the projection from $L^2(\T^d)$ onto the closed and convex set $\W_R$.

Now let $q \in \W_R $ satisfying \eqref{hyp:dir} be the unknown potential and choose the number of measurements $N \in \N$ as in \eqref{eq:balancing}, i.e.\ so that 
\begin{equation}\label{def:N}
\|P_N^\perp F P_\W\|_{L^2(\T^d) \to \ell^2} \leq \frac 1 4.
\end{equation}
Let $(f_l, \Lambda_q(f_l))_{l=1}^N$ be the given boundary data, where $f_l = \psi^{k_l,t_{k_l}}$ are the CGO constructed in Section~\ref{sec:proofs}, and set $y = P_N U(q)$, which is directly computed from $(f_l, \Lambda_q(f_l))_{l=1}^N$ thanks to \eqref{def:U_EIT}. 

The following nonlinear iterative reconstruction algorithm allows for the recovery of $q$ starting from the data $y$.

\begin{theorem}\label{theo:rec}
Under the above assumptions, let $q_0 \in \W_R$ be any initial guess potential and define the sequence 
\[
q_n = A(q_{n-1}), \qquad  n \geq 1.
\]
Then we have the following convergence result:
\[
\|q- q_n\|_{L^2(\T^d)} \leq 4 \left(\frac 3 4\right)^n\|q_1 - q_0\|_{L^2(\T^d)},\qquad n\ge 1.
\]
\end{theorem}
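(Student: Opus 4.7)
The plan is to recognize the iteration as a standard Banach fixed-point scheme on the complete metric space $(\W_R, \|\cdot\|_{L^2(\T^d)})$. The two things to verify are that the true unknown potential $q$ is a fixed point of $A$, and that $A$ is a $\tfrac34$-contraction on $\W_R$. Once these are in place, the quantitative convergence statement with the constant $4 = 1/(1 - 3/4)$ is immediate from the standard a posteriori estimate in Banach's theorem.

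For the fixed-point step, I would substitute $y = P_N U(q) = P_N F q + P_N B(q)$ into \eqref{def:A}. Using linearity of $F^{-1}$ and $P_N + P_N^\perp = I$, the terms $F^{-1}P_N B(q)$ cancel and $F^{-1}(P_N + P_N^\perp)F q = q$, so that the argument of $P_{\W_R}$ is simply $q$. Since $q \in \W_R$, the projection acts as the identity and $A(q) = q$.

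For the contraction step, take $q_1, q_2 \in \W_R$. Since $P_{\W_R}$ is the metric projection onto a nonempty closed convex subset of the Hilbert space $L^2(\T^d)$, it is nonexpansive, so
\begin{equation*}
\|A(q_1)-A(q_2)\|_{L^2(\T^d)} \le \|F^{-1}P_N^\perp F(q_1-q_2)\|_{L^2(\T^d)} + \|F^{-1}P_N(B(q_1)-B(q_2))\|_{L^2(\T^d)}.
\end{equation*}
Because $F$ is unitary and $q_1-q_2 \in \W$, the first term equals $\|P_N^\perp F P_\W(q_1-q_2)\|_{\ell^2} \le \tfrac14 \|q_1-q_2\|_{L^2(\T^d)}$ by the balancing condition \eqref{def:N}. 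The second term is bounded by $\|B(q_1)-B(q_2)\|_{\ell^2} \le \tfrac12 \|q_1-q_2\|_{L^2(\T^d)}$ by Lemma~\ref{lem:contr} (recalling that the CGO parameters $t_k = c'(|k|^s+1)$ are chosen precisely to make $B$ a contraction of constant $1/2$). Summing gives the desired constant $\tfrac34$.

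With $q$ a fixed point of the contraction $A$ and $q_1 = A(q_0)$, Banach's fixed-point theorem yields $\|q_n - q\|_{L^2(\T^d)} \le \tfrac{(3/4)^n}{1-3/4}\|q_1-q_0\|_{L^2(\T^d)} = 4(3/4)^n\|q_1-q_0\|_{L^2(\T^d)}$, which is the claimed bound. The only real subtlety is checking that $A$ does map $\W_R$ into itself, but this is built into the definition via the projection $P_{\W_R}$; no separate argument is needed. There is no genuine obstacle here: the nontrivial analytic content has already been absorbed into Lemma~\ref{lem:contr} and the choice of $N$ through \eqref{def:N}, and the remaining work is a clean combination of the triangle inequality, unitarity of $F$, and nonexpansiveness of the convex projection.
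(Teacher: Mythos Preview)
Your proposal is correct and follows essentially the same approach as the paper's proof: verify that $q$ is a fixed point of $A$ via $y=P_NU(q)=P_NFq+P_NB(q)$, establish the $\tfrac34$-contraction by combining nonexpansiveness of $P_{\W_R}$, the balancing condition \eqref{def:N}, and Lemma~\ref{lem:contr}, and conclude via the Banach fixed-point theorem on the complete metric space $\W_R$. Your write-up is simply more explicit about the a~posteriori estimate that produces the factor $4=(1-\tfrac34)^{-1}$, which the paper leaves implicit.
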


\begin{proof}
We claim that $A(q)=q$ and that $A$ is a contraction. Indeed, by using the identities  $y = P_N U(q)$ and $U = F + B$ we readily derive  that  $A(q)=q$.
Further, it is a straightforward consequence of the fact that $P_{\W_R}$ is Lipschitz (with constant $1$), of the Hilbert projection theorem, of Lemma~\ref{lem:contr} and of assumption \eqref{def:N} that the operator $A$ is a contraction on $\W_R$, namely
\[
\| A(q_2) - A(q_1)\|_{L^2(\T^d)} \leq \frac{3}{4}\|q_2 -q_1\|_{L^2(\T^d)}, \quad q_1,q_2 \in \W_R.
\]
The result is now an immediate consequence of the Banach fixed point theorem, since $\W_R$ is a complete metric space with the distance given by the $L^2$ norm.
\end{proof}

Some comments on this result are in order.
\begin{itemize}[leftmargin=10pt]
\item The exponential rate guarantees a very fast convergence of the iterates to the unknown $q$, and is consistent with the Lipschitz stability of the inverse problem given in \Cref{theo:stab}.
\item We have not presented the details of the corresponding reconstruction algorithm for the Calder\'on problem, which can be easily obtained by using the Liouville transformation $q = \frac{\Delta \sqrt{\sigma}}{\sqrt{\sigma}}$, $v=u/\sqrt{\sigma}$ as in the proof of Corollaries~\ref{cor:uni} and \ref{cor:stab}, in order to formulate Calder\'on problem as an inverse boundary value problem for the Sch\"odinger equation \eqref{eq:schr}.
\end{itemize}

\section{Examples of subspaces $\W$}\label{sec:examples}

As mentioned in \Cref{rem:balancing}, the number $N$ of required measurements to have global uniqueness and stability depends only on the subspace $\W\subseteq L^\infty(\Omega)$ of the potentials. The dependence is explicit via condition \cref{eq:balancing}:
\begin{equation}\label{eq:balancing2}
\|P_N^\perp F P_\W\|_{\H \to \ell^2} \leq \frac 1 4,
\end{equation}
in which $\W$, with an abuse of notation, denotes $i(\W)$, where $i\colon L^\infty(\Omega)\to L^2(\T^d)$ is the extension operator by zero. This condition appears in the literature on signal reconstruction from low frequency Fourier measurements \cite{2014-adcock-hansen-poon,poon2014}: it is strictly related with the balancing property, a fundamental concept in sampling theory and compressed sensing in infinite dimension \cite{AH,2017-adcock-hansen-poon-roman,alberti2017infinite}.

It is worth considering some relevant examples of subspaces $\W$ and to compute the corresponding $N$ as a function  of $\dim\W$. In other words, given $\W$, how many measurements $N$  should we take to have global  uniqueness and stability for the inverse problem?
\subsection{Bandlimited  potentials}
The simplest situation one may consider is with bandlimited potentials $q$ in $L^\infty(\T^d)$ (for simplicity, we set $\Omega=\T^d$). More precisely, the subspace $\W$ is given by 
\[
\W =\{q\in L^\infty(\T^d): \hat q(k) = 0 \;\text{for every $k\in\Z^d$, $\norm{k}_\infty>B$}\},
\]
where $\hat q(k):=\langle q, e_{k} \rangle$ is the Fourier transform and $B\in\N$. In other words, we have
\[
\W = \mathrm{span}\{e_k:k\in\Z^d,\,\norm{k}_\infty\le B\}\subseteq L^\infty(\T^d),
\]
so that $\dim\W=(2B+1)^d$.

It is convenient to choose the ordering $\rho\colon\N\to\Z^d$ in such a way that the frequencies in $\{k\in\Z^d:\norm{k}_\infty\le B\}$ come first, namely
\[
\rho(\{1,\dots,\dim\W\})=\{k\in\Z^d:\norm{k}_\infty\le B\}.
\]
Hence, by \eqref{def:DFT} we immediately have $(Fq)_l=0$ for every $q\in\W$ and $l>\dim\W$. As a result, choosing $N=\dim\W$ gives $\|P_N^\perp F P_\W\|_{\H \to \ell^2}=0$, and so \eqref{eq:balancing2} is automatically satisfied. This is the optimal situation: the number of required measurements equals the dimension of the subspace of the unknowns.

\subsection{Piecewise constant potentials}
A relevant case for the applications is of piecewise constant  potentials \cite{beretta2013} (see \cite{2005-alessandrini-vessella,2006-rondi,alessandrini2017} and references therein for related results).

We consider here a particular situation. Let  $R_1,\dots,R_M\subseteq\Omega$ be $M$ subdomains such that:
\begin{itemize}
\item each subdomain $R_i$ is a $d$-dimensional interval 
with side lengths
\[
a_1^i M^{-\frac{1}{d}},\dots,a_d^i M^{-\frac{1}{d}}
\]
where the scaling $M^{-\frac{1}{d}}$ is put since the size of $\Omega$ is of order $1$;
\item the weights $a_j^i$ are such that $A\le a_j^i$ for some $A\in(0,M^{\frac1d}/\pi]$ and allow for different shapes of the subdomains;
\item and the interiors of these subdomains are disjoint, namely $\mathring{R}_{i_1}\cap \mathring{R}_{i_2}=\emptyset$ for every $i_1\neq i_2$.
\end{itemize}
The subspace $\W$ is given by
\[
\W=\mathrm{span}\{\chi_{R_1},\dots,\chi_{R_M}\},
\]
where $\chi_R$ is the characteristic function of $R$.

The other important ingredient of \eqref{eq:balancing2} is the ordering of the frequencies $\rho\colon\N\to\Z^d$, $l\mapsto k_l$. Here we suppose that \emph{$\rho$ corresponds to the hyperbolic cross in $\Z^d$} \cite[Example~5.12]{2016arXiv161007497J}, namely
\begin{equation}\label{eq:hyperbolic}
l_1\le l_2\quad \implies \quad\prod_{j=1}^d\max(|\rho(l_1)_j|,1) \le  \prod_{j=1}^d\max(|\rho(l_2)_j|,1).
\end{equation}
Recall that the Fourier transform $F\colon L^2(\T^d)\to \ell^2$ is defined by \eqref{def:DFT}, where the frequencies are ordered according to $\rho$.

We now prove that the number of measurements needed to satisfy \eqref{eq:balancing2} (and so to have global uniqueness for the inverse problem considered) is proportional to $M^4$, up to log factors. It is worth observing that this is only a sufficient condition, and may not be necessary. 
Indeed, the use of the Cauchy--Schwarz inequality in \cref{eq:CS} below  yields an additional $M$ factor, which  could perhaps be removed arguing as in  \cite[Lemma 5.1]{poon2014}, at least for a uniform partition of $\T^d$ made of $d$-cubes. The search for the optimal exponent goes beyond the scopes of this work, and is an interesting direction for future research.
\begin{proposition}
Under the above assumptions, we have
\begin{equation}\label{eq:balancing3}
\|P_N^\perp F P_\W\|_{\H \to \ell^2} \leq C  \frac{\log^{d-1}(N)}{\sqrt{N}} M^2,
\end{equation}
for some $C>0$ depending only on $d$, $\rho$ and $A$. In particular, \eqref{eq:balancing2} is satisfied provided that
\[
\frac{N}{\log^{2d-2}(N)}\ge 16C^2M^4.
\]
\end{proposition}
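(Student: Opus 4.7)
The plan is to expand any $q \in \W$ in the basis of indicators as $q = \sum_{i=1}^M c_i \chi_{R_i}$ and to exploit two structural facts: the disjointness of the $R_i$ (which controls $\sum|c_i|^2$ in terms of $\|q\|_{L^2}$), and the decay of $\hat{\chi}_{R_i}$ combined with the hyperbolic-cross combinatorics (which controls the tail of $Fq$).

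First, disjointness of interiors yields $\|q\|_{L^2(\T^d)}^2 = \sum_i |c_i|^2 |R_i|$, and since $|R_i| = M^{-1}\prod_j a_j^i \geq A^d/M$, we obtain $\sum_i |c_i|^2 \leq (M/A^d)\|q\|_{L^2}^2$. Writing $(Fq)_l = \sum_i c_i\hat{\chi}_{R_i}(k_l)$ and applying Cauchy--Schwarz,
\[
|(Fq)_l|^2 \leq M\sum_i |c_i|^2\,|\hat{\chi}_{R_i}(k_l)|^2,
\]
so, summing over $l>N$,
\[
\|P_N^\perp F q\|_{\ell^2}^2 \leq M\Big(\sum_i|c_i|^2\Big)\max_i T_i \leq \frac{M^2}{A^d}\|q\|_{L^2}^2\cdot\max_i T_i,
\]
where $T_i := \sum_{l>N}|\hat{\chi}_{R_i}(k_l)|^2$. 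The estimate \eqref{eq:balancing3} will then follow once we establish $T_i \leq C\log^{2(d-1)}(N)/N$ uniformly in $i$.

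For this tail bound, set $\tilde k_j := \max(|k_j|,1)$. The factorization of $\hat{\chi}_{R_i}$ over coordinates gives
\[
|\hat{\chi}_{R_i}(k)|^2 \leq \prod_j \min\!\Big((L_j^i)^2,\tfrac{1}{\pi^2\tilde k_j^2}\Big) \leq C\prod_j\tilde k_j^{-2},
\]
uniformly in $i$, using $L_j^i := a_j^i M^{-1/d}\leq 1$. The hyperbolic cross ordering \eqref{eq:hyperbolic} implies that $\{k_l : l\leq N\} \supseteq H_{r_N} := \{k:\prod_j \tilde k_j \leq r_N\}$ for some $r_N$ with $N \leq |H_{r_N}| \lesssim r_N\log^{d-1}(r_N)$, hence $r_N \gtrsim N/\log^{d-1}(N)$. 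Parametrising by $P = \prod_j \tilde k_j$,
\[
T_i \leq C\sum_{k:\prod_j\tilde k_j>r_N}\prod_j\tilde k_j^{-2} = C\sum_{P>r_N}\tau_d(P)P^{-2},
\]
where $\tau_d$ is the $d$-fold ordered divisor function. Abel summation against the classical estimate $\sum_{P\leq X}\tau_d(P)\asymp X\log^{d-1}(X)$ yields $\sum_{P>r}\tau_d(P)P^{-2} \lesssim \log^{d-1}(r)/r$; substituting $r=r_N$ gives $T_i \lesssim \log^{2(d-1)}(N)/N$. Combining with the display above and taking square roots proves \eqref{eq:balancing3}, and the second assertion is obtained by imposing that the right-hand side of \eqref{eq:balancing3} be at most $1/4$.

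The main obstacle is the uniform tail bound on $T_i$, which blends the analytic decay of $\hat{\chi}_R$ with the combinatorics of the hyperbolic cross. Two points deserve care: the bound $|\hat{\chi}_{R_i}(k)|^2 \leq C\prod_j\tilde k_j^{-2}$ is rather coarse (it discards the sharper $L_j^2$-bound valid for small $|k_j|$), but it is this coarse form that lets one collapse the multi-index sum into the divisor sum $\sum_{P>r} \tau_d(P)P^{-2}$; and verifying $|H_r|\lesssim r\log^{d-1}(r)$ together with the Abel-summation estimate requires the standard divisor-function asymptotics. Sharper tail estimates (e.g.\ exploiting Parseval $\sum_k|\hat\chi_{R_i}(k)|^2 = |R_i|\lesssim 1/M$ for small $N$) could presumably improve the $M$-dependence, consistently with the authors' remark that Cauchy--Schwarz is the source of the additional $M$ factor.
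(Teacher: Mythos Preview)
Your argument is correct and follows essentially the same route as the paper: expand in the basis $\{\chi_{R_i}\}$, apply Cauchy--Schwarz over $i$, and control the tail of $F\chi_{R_i}$ via the hyperbolic-cross combinatorics. Two differences are worth noting.

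First, where the paper invokes \cite[Lemma~5.13]{2016arXiv161007497J} to pass from the product bound $\prod_j\tilde k_j^{-1}$ to the scalar bound $\log^{d-1}(l)/l$, you instead argue directly: the inclusion $\{k_l:l>N\}\subseteq\{k:\prod_j\tilde k_j>r_N\}$ with $r_N\gtrsim N/\log^{d-1}N$, together with Abel summation against $|H_X|\asymp X\log^{d-1}X$, gives $T_i\lesssim\log^{2(d-1)}(N)/N$. This is a self-contained replacement for the cited lemma. (Minor quibble: you cannot have simultaneously $\{k_l:l\le N\}\supseteq H_{r_N}$ and $N\le|H_{r_N}|$ unless equality holds; what you actually use is $\{k_l:l>N\}\subseteq\{k:\prod_j\tilde k_j>r_N\}$ with $N\le|H_{r_N+1}|$, which is what your estimate on $r_N$ requires.)

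Second, and more interestingly, your pointwise bound is sharper than the paper's. Since $|\widehat{\chi}_{R_i}(k)|=\prod_j L_j^i|\sinc(\pi k_j L_j^i)|$, keeping the prefactors $L_j^i\le 1$ gives $|\widehat{\chi}_{R_i}(k)|\le\prod_j\min(L_j^i,1/(\pi|k_j|))\le C\prod_j\tilde k_j^{-1}$ with \emph{no} $M$-dependence. The paper instead drops the prefactor and then uses $L_j^i\ge AM^{-1/d}$, which costs a factor $M$ in the pointwise bound and hence an extra $M$ in the final estimate. Your display therefore actually yields
\[
\|P_N^\perp F P_\W\|_{\H\to\ell^2}\le C\,\frac{\log^{d-1}(N)}{\sqrt N}\,M,
\]
which is strictly stronger than the stated $M^2$ bound (and answers, in part, the paper's own remark that the Cauchy--Schwarz step costs a spurious factor of $M$).
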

\begin{proof}
Setting $\sinc(x)=\sin(x)/x$, a direct calculation shows that
\[
|F\chi_{R_i}(l)|=\prod_{j=1}^d |\sinc(\pi M^{-\frac{1}{d}} a_j^i \rho(l)_j)|
\le \prod_{j=1}^d \min(\frac{1}{|\pi M^{-\frac{1}{d}} a_j^i \rho(l)_j|},1).
\]
Thus, we readily obtain
\[
|F\chi_{R_i}(l)|
\le \prod_{j=1}^d \frac{1}{\max(|\pi M^{-\frac{1}{d}} A\, \rho(l)_j|,1)}=
\prod_{j=1}^d \frac{(A\pi)^{-1} M^{\frac{1}{d}} }{\max(| \rho(l)_j|,(A\pi)^{-1} M^{\frac{1}{d}} )}.
\]
Hence, in view of \eqref{eq:hyperbolic}, we can apply \cite[Lemma~5.13]{2016arXiv161007497J} and obtain
\begin{equation}\label{eq:chi}
|F\chi_{R_i}(l)|
\le 
(A\pi)^{-d}M \frac{ 1 }{\prod_{j=1}^d\max(| \rho(l)_j|,1 )}\le C(A,d,\rho) M \frac{\log^{d-1}(l+1)}{l} .
\end{equation}

Take now $f\in\W$ with $\norm{f}_{L^2(\T^d)}=1$. Since
\[
\{f_i=\frac{\sqrt{M}}{\sqrt{a_1^i\cdots a_d^i}}\chi_{R_i}:i=1,\dots,M\}
\]
is an orthonormal basis of $\W$, we can write $f=\sum_{i=1}^M c_i f_i$ with $\sum_i c_i^2=1$. Thus we have
\begin{equation}\label{eq:CS}
|Ff(l)|\le \sum_{i=1}^M |c_i| \frac{\sqrt{M}}{\sqrt{a_1^i\cdots a_d^i}} |F \chi_{R_i}(l)|
\le \sqrt{M/A^d}\left(\sum_{i=1}^M |F \chi_{R_i}(l)|^2\right)^\frac12.
\end{equation}
Therefore, \eqref{eq:chi} immediately yields
$
|Ff(l)|\le C(A,d,\rho) M^2 \frac{\log^{d-1}(l+1)}{l}
$, and so
\[
\begin{split}
\norm{P_N^\perp F f}_{\ell^2}^2& = \sum_{l=N+1}^{+\infty}  |Ff(l)|^2\\&\le C(A,d,\rho)M^4 \sum_{l=N+1}^{+\infty} \frac{\log^{2d-2}(l+1)}{l^2}
\\&\le C(A,d,\rho)M^4  \frac{\log^{2d-2}(N)}{N}.
\end{split}
\]
Finally, this bound immediately implies \eqref{eq:balancing3}.
\end{proof}

\subsection{Potentials belonging to low-scale wavelet subspaces}

Given the importance of wavelets in imaging, it is interesting to look at the case when $\W$ is a subspace of dimension $M$ given by wavelets below a certain scale. Under certain assumptions on the mother wavelet and the scaling function, one has $N=O(M)$ if $d=1$ \cite[Lemma 5.1]{poon2014}. This result is expected to hold also in higher dimension, at least in the case of separable wavelets, for which the 1D proof should be easily generalizable (see also \cite{2016arXiv161007497J}). Therefore, if the unknown potential belongs to the space generated by the first $M$ wavelets (ordered according to the scale), $O(M)$ measurements are needed for the reconstruction (up to log factors), and so this estimate is substantially the best possible. It is worth observing that much fewer measurements are needed in this case than in the piecewise constant case.

\bibliography{CS}
\bibliographystyle{plain}

\end{document}